\theoremstyle{remark}
\newtheorem*{rem*}{\protect\remarkname}
\theoremstyle{plain}
\newtheorem{thm}{\protect\theoremname}[section]
\theoremstyle{definition}
\newtheorem{defn}[thm]{\protect\definitionname}
\theoremstyle{plain}
\newtheorem{lem}[thm]{\protect\lemmaname}
\theoremstyle{plain}
\newtheorem{prop}[thm]{\protect\propositionname}
\theoremstyle{remark}
\newtheorem{claim}[thm]{\protect\claimname}
\providecommand{\claimname}{Claim}
\providecommand{\definitionname}{Definition}
\providecommand{\lemmaname}{Lemma}
\providecommand{\propositionname}{Proposition}
\providecommand{\remarkname}{Remark}
\providecommand{\theoremname}{Theorem}
\begin{document}
\title{\onehalfspacing{}\noindent Bias Implies Low Rank for Quartic Polynomials}
\author{\onehalfspacing{}\noindent Amichai Lampert}
\maketitle
\begin{abstract}
\begin{onehalfspace}
\noindent We investigate the structure of polynomials of degree four
in many variables over a fixed prime field $\mathbb{F}=\mathbb{F}_{p}$.
In \cite{GT} it was shown that if a polynomial $f:\mathbb{F}^{n}\rightarrow\mathbb{F}$
is poorly distributed, then it is a function of a few polynomials
of smaller degree. In \cite{HS} an effective bound was found for
$f$ of degree four: If $bias\left(f\right)\geq\delta$, then the
number of lower degree polynomials required is at most polynomial
in $1/\delta$ and $f$ has a simple presentation as a sum of their
products. We make a step towards showing that in fact the number of
lower degree polynomials required is at most log-polynomial in $1/\delta$,
with the same simple presentation of $f$. This result was a Master's
thesis supervised by T. Ziegler at the Hebrew University of Jerusalem,
submitted in October 2018. A log-polynomial bound for polynomials
of arbitrary degree was recently proved independently by Milicevic
and by Janzer.
\end{onehalfspace}
\end{abstract}
\begin{onehalfspace}
\noindent \tableofcontents{}
\end{onehalfspace}

\newpage{}
\begin{onehalfspace}

\section{Introduction}
\end{onehalfspace}

\begin{onehalfspace}
\noindent Throughout this paper $\mathbb{F=\mathbb{F}}_{p}$ is a
fixed prime field for some $p\geq5$.\\
\\
For a function $f:\mathbb{F}^{n}\rightarrow\mathbb{F}$ and a direction
$h\in\mathbb{F}^{n}$, the discrete derivative $\varDelta_{h}f:\mathbb{F}^{n}\rightarrow\mathbb{F}$
is defined by the formula $\varDelta_{h}f\left(x\right)=f\left(x+h\right)-f\left(x\right)$.
For $d<p$, we say that $f$ is a polynomial of degree at most $d$
if for all $h_{1},\ldots,h_{d+1}\in\mathbb{F}^{n}$ we have $\varDelta_{h_{1}}\ldots\varDelta_{h_{d+1}}f\equiv0$.
We say that $f$ is of degree $d$ and write $deg\left(f\right)=d$
if $d$ is the minimal integer with this property. For a vector space
$V$ we define
\[
\mathfrak{\mathcal{P}}_{d}\left(V\right):=\left\{ f:V\rightarrow\mathbb{F}|\,f\,is\,a\,polynomial\,of\,degree\,at\,most\,d\right\} .
\]

\end{onehalfspace}
\begin{rem*}
\begin{onehalfspace}
\noindent In the sequel, linear polynomials will be denoted by Greek
letters and quadratic polynomials by Roman letters. 
\end{onehalfspace}
\end{rem*}
\begin{defn}[Rank]
\begin{onehalfspace}
\noindent Let $f:V\longrightarrow\mathbb{F}$ be a polynomial of
degree $d$. If we have a presentation\\
$f\left(x\right)=\sum_{i=1}^{r}g_{i}\left(x\right)h_{i}\left(x\right)+g_{0}\left(x\right)$
with $deg\left(g_{i}\right),deg\left(h_{i}\right)<d$, then we say
$f$ has \textbf{rank} at most $r$. We say that $f$ has rank $r$
and write $rank\left(f\right)=r$ if $r$ is the minimal integer with
such a presentation.\footnote{This quantity is also called the Schmidt h-invariant.}
\end{onehalfspace}
\end{defn}

\begin{onehalfspace}
\noindent We measure the distribution of a function $f:\mathbb{F}^{n}\rightarrow\mathbb{F}$
using $bias\left(f\right):=\left|\mathbb{\mathbb{E}}_{\boldsymbol{x}\in\mathbb{F}^{n}}e_{p}\left(f\left(x\right)\right)\right|$,
where $e_{p}\left(j\right):=e^{2\pi ij/p}$. If $bias\left(f\right)$
is large, then its values are poorly distributed. \\
\\
An important result in the field of higher-order Fourier analysis
is that biased polynomials are low-rank (see \cite{GT}). Because
the proof is not quantitatively effective, it is a question of some
interest to try and produce effective quantitative bounds. For quadratic
polynomials, a well known classical result is the following (see e.g.
Lemma 1.6 in \cite{GT}):
\end{onehalfspace}
\begin{thm}
\begin{onehalfspace}
\noindent \label{thm:quad:bias}Let $f:\mathbb{F}^{n}\rightarrow\mathbb{F}$
be a polynomial of degree 2 with $bias\left(f\right)\geq\delta$.
then $rank\left(f\right)\leq2\log_{p}\left(1/\delta\right)$.
\end{onehalfspace}
\end{thm}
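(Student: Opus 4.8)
The plan is to tie both the bias and the rank of $f$ to the rank of a single symmetric bilinear form. Let $B(x,y) := f(x+y) - f(x) - f(y) + f(0)$ be the symmetric bilinear form attached to the quadratic part of $f$, and let $H := \{h \in \mathbb{F}^n : B(h,\cdot)\equiv 0\}$ be its radical, which is a subspace of $\mathbb{F}^n$ of codimension $\mathrm{rk}(B)$.

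First I would bound $\mathrm{rk}(B)$ in terms of $\delta$. Squaring the bias and substituting $x \mapsto x+h$ gives
\[
bias(f)^2 = \mathbb{E}_{x,h}\, e_p\bigl(f(x+h)-f(x)\bigr) = \mathbb{E}_h\, \mathbb{E}_x\, e_p\bigl(\Delta_h f(x)\bigr).
\]
Since $f$ is quadratic, for each fixed $h$ the map $x \mapsto \Delta_h f(x)$ is affine in $x$ with linear part $x \mapsto B(h,x)$, so the inner average over $x$ is a root of unity when $h \in H$ and vanishes otherwise. Hence
\[
\delta^2 \le bias(f)^2 \le \Pr_h[\,h \in H\,] = p^{-\mathrm{rk}(B)},
\]
which forces $\mathrm{rk}(B) \le 2\log_p(1/\delta)$.

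Next I would convert this into a low-rank presentation of $f$. Write $k := \mathrm{rk}(B)$. Because $p$ is odd, $B$ is diagonalizable, so in suitable linear coordinates the quadratic part of $f$ equals $\sum_{i=1}^k a_i x_i^2$ with all $a_i \ne 0$, while $x_{k+1},\dots,x_n$ span $H$. Completing the square in $x_1,\dots,x_k$ (using $p$ odd once more) absorbs the part of the affine summand of $f$ involving these variables, leaving
\[
f = \sum_{i=1}^k a_i\, y_i^2 + \lambda = \sum_{i=1}^k (a_i y_i)\, y_i + \lambda,
\]
with each $y_i$ an affine form and $\lambda$ affine. This presents $f$ as a sum of $k$ products of polynomials of degree $<2$ plus a polynomial of degree $<2$, so $rank(f) \le k \le 2\log_p(1/\delta)$.

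The main obstacle is not the bias computation, which is routine, but the second step: one must check carefully that diagonalizing $B$ and completing the square genuinely yields a presentation with exactly $\mathrm{rk}(B)$ product terms and an honest degree-$<2$ remainder, and observe that this is precisely where the hypothesis $p \ge 5$ (in particular $p \ne 2$) is used.
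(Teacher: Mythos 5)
Your proof is correct: the bias computation correctly shows $\mathrm{rk}(B)\le 2\log_p(1/\delta)$, and diagonalization plus completing the square (valid since $p$ is odd) converts this into a presentation with at most $\mathrm{rk}(B)$ products of affine forms. The paper does not prove this theorem itself but cites Lemma 1.6 of \cite{GT}, whose argument is essentially the one you give, so your proposal matches the standard approach.
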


\begin{onehalfspace}
\noindent Haramaty and Shpilka showed (in \cite{HS}) that if $bias\left(f\right)\geq\delta$,
then $rank\left(f\right)$ is polynomial in $\log_{p}\left(1/\delta\right)$
or polynomial in $1/\delta$, when f is of degree 3 or 4, respectively.
In this paper we work towards improving the bound for polynomials
of degree 4. Our main theorem is the following:
\end{onehalfspace}
\begin{thm}
\begin{onehalfspace}
\noindent \label{thm:Main}Let $f:\mathbb{F}^{n}\rightarrow\mathbb{F}$
be a polynomial of degree 4 with $bias\left(f\right)\geq\delta$.
Then there exists a subspace $V\subset\mathbb{F}^{n}$ and quadratic
polynomials $Q_{1},\ldots Q_{N}\subset\mathcal{P}_{2}\left(V\right)$,
where both $codim\left(V\right),N$ are $poly\left(log_{p}\left(1/\delta\right)\right)$,
such that for all $x\in V$ we have
\[
Q_{1}\left(x\right)=\ldots=Q_{N}\left(x\right)=0\,\implies\,\varDelta_{x}\varDelta_{x}\varDelta_{x}\varDelta_{x}f=0.
\]
\end{onehalfspace}
\end{thm}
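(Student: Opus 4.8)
The plan is to reduce the statement to the structure of the top-degree part of $f$. Write $f=f_{4}+f_{3}+f_{2}+f_{1}+f_{0}$ with $f_{i}$ the homogeneous component of degree $i$. Since $f$ has degree $4$ and $p\geq5$ (so that $4!=24\neq0$ in $\mathbb{F}$), a direct finite-difference computation gives $\varDelta_{x}\varDelta_{x}\varDelta_{x}\varDelta_{x}f=24\,f_{4}(x)$; thus $\varDelta_{x}\varDelta_{x}\varDelta_{x}\varDelta_{x}f=0$ is equivalent to $f_{4}(x)=0$, and what must be produced is a subspace $V$ with $codim(V)=poly(\log_{p}(1/\delta))$ together with $poly(\log_{p}(1/\delta))$ quadratics $Q_{1},Q_{2},\dots\in\mathcal{P}_{2}(V)$ whose common zero set inside $V$ is contained in the zero set of $f_{4}$. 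I would carry this out through the symmetric $4$-linear form attached to $f_{4}$, namely $M(x_{1},x_{2},x_{3},x_{4}):=\varDelta_{x_{1}}\varDelta_{x_{2}}\varDelta_{x_{3}}\varDelta_{x_{4}}f$, which for a polynomial of degree $4$ is multilinear in the $x_{i}$, independent of the base point, and satisfies $M(x,x,x,x)=24\,f_{4}(x)$. Applying the Cauchy--Schwarz inequality four times to $e_{p}(f)$ and using that $f$ has degree $4$ gives $\delta^{16}\leq bias(f)^{16}\leq\mathbb{E}_{x_{1},x_{2},x_{3},x_{4}}\,e_{p}\!\left(M(x_{1},x_{2},x_{3},x_{4})\right)$; the right-hand side is a nonnegative real, equal to $p^{-a}$ where $a$ is the analytic rank of $M$, so $a\leq16\log_{p}(1/\delta)$.

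The core of the proof is to upgrade this analytic-rank bound to a partition-rank bound on $M$: a presentation $M=\sum_{i=1}^{N}A_{i}B_{i}$ in which each term splits the four slots into two nonempty groups and $N=poly(\log_{p}(1/\delta))$. My approach is to descend on the number of slots. Freezing the first slot, for a $\gtrsim\delta^{16}$ fraction of directions $h$ the trilinear form $M(h,\cdot,\cdot,\cdot)$ still has analytic rank $O(\log_{p}(1/\delta))$, and by the quantitative (poly-logarithmic) bias-implies-low-rank theorem for trilinear forms (the degree-$3$ case of Haramaty--Shpilka \cite{HS}) each such $M(h,\cdot,\cdot,\cdot)$ has partition rank $poly(\log_{p}(1/\delta))$, that is, it is a combination of that many products $(\text{bilinear})\cdot(\text{linear})$. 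I expect the main obstacle to be that these decompositions vary with $h$ and must be merged into a \emph{single} family of $poly(\log_{p}(1/\delta))$ bilinear (hence, on the diagonal, quadratic) and linear forms, valid after passing to a subspace $V$ of codimension $poly(\log_{p}(1/\delta))$. My plan for the merging is a regularization loop: maintain a collection $\mathcal{C}$ of quadratic forms, adjoin a new one whenever some direction $h$ forces a piece that is high-rank modulo $\mathcal{C}$, and choose the potential so that the loop halts after $poly(\log_{p}(1/\delta))$ steps. What makes this feasible is that the pieces coming from different $h$ are values of the one multilinear form $M$ with slots frozen, so the assignment $h\mapsto M(h,\cdot,\cdot,\cdot)$ is \emph{linear} in $h$; this rigidly constrains how the decompositions can vary and lets one run an additive-combinatorial argument to isolate the large subspace on which a fixed bounded family suffices. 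Squeezing the bookkeeping here down to log-polynomial, rather than polynomial, in $1/\delta$ is precisely the improvement over \cite{HS}.

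Finally, given $M=\sum_{i=1}^{N}A_{i}B_{i}$, I would set all four slots equal to a single $x$. A term splitting the slots $1$ versus $3$ specializes to $\ell_{i}(x)c_{i}(x)$ with $\ell_{i}$ linear and $c_{i}$ cubic; let $V:=\bigcap\ker\ell_{i}$ be the intersection over all such terms, so $codim(V)\leq N$. A term splitting $2$ versus $2$ specializes to $q_{i}(x)q_{i}'(x)$ with $q_{i},q_{i}'$ quadratic. Restricting to $V$ annihilates every term of the first kind, so $24\,f_{4}|_{V}=\sum q_{i}|_{V}\,q_{i}'|_{V}$ is a sum of at most $N$ products of quadratics in $\mathcal{P}_{2}(V)$. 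Taking the $Q_{j}$ to be the (at most $2N$) forms $q_{i}|_{V},q_{i}'|_{V}$, we conclude that for $x\in V$ with all $Q_{j}(x)=0$ necessarily $f_{4}(x)=0$, i.e. $\varDelta_{x}\varDelta_{x}\varDelta_{x}\varDelta_{x}f=0$, while $codim(V)$ and the number of the $Q_{j}$ are both $poly(\log_{p}(1/\delta))$, as required.
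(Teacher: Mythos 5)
Your first and last steps are fine: $\varDelta_{x}\varDelta_{x}\varDelta_{x}\varDelta_{x}f=4!\,f_{4}(x)$ for a degree-$4$ polynomial (the paper notes this too), the Gowers--Cauchy--Schwarz bound $\delta^{16}\leq\mathbb{E}\,e_{p}(M(x_{1},\dots,x_{4}))$ is correct, and \emph{if} you had a partition-rank decomposition $M=\sum_{i=1}^{N}A_{i}B_{i}$ with $N=poly(\log_{p}(1/\delta))$, then killing the $1$-vs-$3$ terms by passing to the kernel of their linear factors and reading off the $2$-vs-$2$ terms as products of quadratics would indeed yield the theorem (in fact something stronger than the theorem, which only asserts vanishing on the common zero locus and leaves the rank bound to the Kazhdan--Ziegler Nullstellensatz).

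The genuine gap is the middle step. Upgrading the analytic-rank bound on $M$ to a log-polynomial partition-rank bound \emph{is} the theorem (it is precisely the content of the Milicevic and Janzer results cited in the abstract), and your proposal for it --- slice off one variable, apply the degree-$3$ result to each $M(h,\cdot,\cdot,\cdot)$, then ``merge'' the $h$-dependent decompositions by a regularization loop whose ``potential is chosen so that the loop halts after $poly(\log_{p}(1/\delta))$ steps'' --- names the obstacle without resolving it. Everything hard in the paper lives exactly there: the dichotomy arguments (Case 1: the new linear form is in the span of the old ones, so absorb it; Case 2: fix $s_{0}$ and solve for one quadratic in terms of fixed ones), which are what keep the loop from adding new quadratics indefinitely; the Bogolyubov--Chang lemma to upgrade the surviving dense set of directions $t$ back to a subspace; the $R$-regularization of the extracted family $Q_{1},\dots,Q_{N}$; and the equidistribution estimates (Claim \ref{claim:linear equations} and Claim \ref{claim:admissible-density}) needed to run the counting over the variety $X$ when removing the residual cubic and quadratic terms. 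A generic regularization loop of the kind you describe has no a priori reason to terminate in log-polynomially many steps --- the naive version is what gives the $poly(1/\delta)$ bound of Haramaty--Shpilka --- and note that even with all this machinery the paper never actually produces a global decomposition of $f_{4}$; it only proves vanishing of $\varDelta_{x}^{4}f$ on $X\cap V_{3}$. So your proposal is a reasonable high-level strategy (and close in spirit to what the paper does, which also slices derivatives and extracts common quadratics), but as written it assumes the conclusion at the one point where the work is.
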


\begin{onehalfspace}
\noindent \textbf{Note: }We have the Taylor expansion $g\left(x\right)=\frac{1}{4!}\varDelta_{x}\varDelta_{x}\varDelta_{x}\varDelta_{x}f$
which satisfies $f-g\in\mathcal{P}_{3}\left(x\right)$, so $f,g$
both have the same rank. 
\end{onehalfspace}

Applying the Nullstellensatz of Kazhdan and Ziegler (Theorem 1.8 in
\cite{KZ}) together with this result, we can bound $rank\left(f\right)$. 
\begin{rem*}
\begin{onehalfspace}
\noindent We expect that with similar methods this result can be extended
to $f$ of higher degree. 
\end{onehalfspace}
\end{rem*}
\begin{rem*}
A similar bound was recently proved independently for polynomials
of arbitrary degree by L. Milicevic in \cite{M} and also by O. Janzer
in \cite{J}.
\end{rem*}
\begin{onehalfspace}
\noindent \textbf{Note:} For a survey of higher-order Fourier analysis,
see \cite{HHL}.\\

\noindent The proof of Theorem \ref{thm:Main} will be composed of
several steps. Our starting point is a lemma from \cite{HS} which
says that we can restrict $f$ to a large subspace such that all of
its derivatives are low-rank. We then show that we can identify a
small number of quadratics which appear in all the derivatives. Finally,
we restrict our attention to the set of common zeros of these quadratics
and show that $\varDelta_{x}\varDelta_{x}\varDelta_{x}\varDelta_{x}f$
vanishes on this set. 
\end{onehalfspace}
\begin{onehalfspace}

\section{Identifying relevant quadratics}
\end{onehalfspace}

\begin{onehalfspace}
\noindent We begin with a lemma which allows us to restrict $f$ to
large subspaces.
\end{onehalfspace}
\begin{lem}
\begin{onehalfspace}
\noindent Let $V\subseteq\mathbb{F}^{n}$ be a subspace. Then $rank\left(f\right)\leq rank\left(f_{\mid V}\right)+codim\left(V\right)$.
\end{onehalfspace}
\end{lem}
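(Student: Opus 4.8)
The plan is to reduce $f$ to its restriction $f_{\mid V}$ by peeling off, one at a time, the $codim\left(V\right)=:c$ linear coordinates transverse to $V$. Fix a complement $W$ of $V$ in $\mathbb{F}^{n}$ and choose linear coordinates $x=\left(y,z\right)$ with $y\in\mathbb{F}^{n-c}$ and $z=\left(z_{1},\dots,z_{c}\right)\in\mathbb{F}^{c}$ adapted to the decomposition $\mathbb{F}^{n}=V\oplus W$, so that $V=\left\{ z=0\right\}$ and $f_{\mid V}$ is identified with the function $y\mapsto f\left(y,0\right)$. Write $d=deg\left(f\right)$; recall $d<p$ (in the application of interest $d=4$).

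First I would establish an elementary division step: a polynomial $P\left(y,z\right)$ of degree at most $d$ that vanishes identically on $\left\{ z=0\right\}$ can be written as $P=\sum_{i=1}^{c}z_{i}R_{i}$ with each $R_{i}$ of degree at most $d-1$. Setting $a_{i}=P\left(y,z_{1},\dots,z_{i},0,\dots,0\right)$, the telescoping identity
\[
P\left(y,z\right)=a_{c}-a_{0}=\sum_{i=1}^{c}\left(a_{i}-a_{i-1}\right)
\]
holds since $a_{0}=P\left(y,0,\dots,0\right)\equiv0$, and each summand $a_{i}-a_{i-1}$ is a polynomial whose value at $z_{i}=0$ vanishes; since its degree in $z_{i}$ is at most $d<p$, it has zero constant term as a polynomial in $z_{i}$ and hence equals $z_{i}$ times a polynomial of degree at most $d-1$. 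This is the only point where the hypothesis $d<p$ enters, ensuring that the degree-$\le d$ representative genuinely behaves like a polynomial in each variable.

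Applying this to $P:=f\left(y,z\right)-f\left(y,0\right)$, which has degree at most $d$ and vanishes on $\left\{ z=0\right\}$, yields
\[
f\left(y,z\right)=f\left(y,0\right)+\sum_{i=1}^{c}z_{i}R_{i}\left(y,z\right),\qquad deg\left(R_{i}\right)\le d-1.
\]
Now take a minimal presentation $f\left(y,0\right)=\sum_{j=1}^{r}g_{j}\left(y\right)h_{j}\left(y\right)+g_{0}\left(y\right)$ of the restriction, so $r=rank\left(f_{\mid V}\right)$ and $deg\left(g_{j}\right),deg\left(h_{j}\right),deg\left(g_{0}\right)<deg\left(f_{\mid V}\right)\le d$. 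Substituting into the previous display exhibits $f$ as a sum of $r+c$ products of polynomials, every factor of degree $<d$ (the $z_{i}$ are linear and $deg\left(R_{i}\right)\le d-1$), plus the lone lower-degree term $g_{0}$. By the definition of rank this gives $rank\left(f\right)\le r+c=rank\left(f_{\mid V}\right)+codim\left(V\right)$.

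The proof is routine and I do not anticipate any real obstacle; the only thing demanding a little attention is the division step above together with the attendant degree bookkeeping — in particular, when restriction strictly lowers the degree, the factors inherited from the presentation of $f_{\mid V}$ still qualify as having degree $<deg\left(f\right)$, \emph{a fortiori}, and the degenerate cases $deg\left(f\right)\le1$ or $deg\left(f_{\mid V}\right)\le1$ are immediate.
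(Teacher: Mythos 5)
Your proof is correct and follows essentially the same route as the paper's: both peel off the $c=\mathrm{codim}(V)$ transverse coordinates to write $f$ as $f_{\mid V}$ plus a sum of $c$ terms of the form (linear coordinate)$\times$(polynomial of degree $\le d-1$), and then invoke the definition of rank. The paper simply asserts this decomposition in one line, whereas you justify it via the telescoping/division argument; the content is the same.
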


\begin{proof}
\begin{onehalfspace}
\noindent Choose a basis so that $V=\left\{ x\in\mathbb{F}^{n}\mid x_{1}=x_{2}=\ldots=x_{k}=0\right\} $
where $k=codim\left(V\right)$. Write 
\[
f=\sum_{i=1}^{k}x_{i}g_{i}\left(x_{i+1},x_{i+2},\ldots,x_{n}\right)+g\left(x_{k+1},\ldots,x_{n}\right).
\]
Then $rank\left(f\right)\leq rank\left(g\right)+k=rank\left(f_{\mid V}\right)+codim\left(V\right)$.
\end{onehalfspace}
\end{proof}
\begin{onehalfspace}
\noindent In view of this lemma, it suffices to show that our polynomial
is low rank when restricted to a large subspace, a fact we will often
use. Setting $\rho:=\log_{p}\left(1/\delta\right)$, we begin with
Lemma 4.2 from \cite{HS}:
\end{onehalfspace}
\begin{lem}[Subspace with low rank derivatives]
\begin{onehalfspace}
\noindent \label{lem:low-rank-subspace} Let $f:\mathbb{F}^{n}\rightarrow\mathbb{F}$
be a degree 4 polynomial such that $bias\left(f\right)\geq\delta$.
Then there exists a linear subspace $V\subseteq\mathbb{F}^{n}$ such
that $codim_{\mathbb{F}^{n}}\left(V\right)=poly\left(\rho\right)$
, and such that for every $y\in V$ we have $rank\left(\Delta_{y}f\right)=poly\left(\rho\right)$.
\end{onehalfspace}
\end{lem}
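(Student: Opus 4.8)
I would follow the approach of Haramaty and Shpilka. Write $f=f_{4}+f_{3}+f_{\le2}$ for the decomposition into homogeneous parts, let $B\left(h_{1},h_{2},h_{3},h_{4}\right)=\Delta_{h_{1}}\Delta_{h_{2}}\Delta_{h_{3}}\Delta_{h_{4}}f$ be the symmetric $4$-linear Taylor tensor of $f$, and let $T$ be the cubic Taylor tensor of $f_{3}$. The plan is to prove the two structural facts
\[
\text{(A)\ \ }B\text{ has partition rank }poly\left(\rho\right),\qquad\text{(B)\ \ }rank\left(f_{3}\right)=poly\left(\rho\right),
\]
and then deduce the lemma by an elementary argument.

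For (A): iterating the elementary inequality $bias\left(\phi\right)^{2}\le\mathbb{E}_{y}\,bias\left(\Delta_{y}\phi\right)$ (expand $\left|\mathbb{E}_{x}e_{p}\left(\phi\right)\right|^{2}$ and set $y=x-x'$) and using Cauchy--Schwarz gives $bias\left(f\right)^{4}\le\mathbb{E}_{y,z}\,bias\left(\Delta_{y}\Delta_{z}f\right)$. Now $\Delta_{y}\Delta_{z}f$ has degree $2$ and its quadratic part is, up to a nonzero constant, the bilinear form $x\mapsto B\left(y,z,x,x\right)$, so Theorem~\ref{thm:quad:bias} read in contrapositive form shows $bias\left(\Delta_{y}\Delta_{z}f\right)$ is $p^{-\Theta\left(rank\,B\left(y,z,\cdot,\cdot\right)\right)}$ up to a bounded factor; a dyadic pigeonholing then produces $K=O\left(\rho\right)$ for which the slice $B\left(y,z,\cdot,\cdot\right)$ has rank $\le K$ for a $p^{-O\left(\rho\right)}$-fraction of pairs. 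Finally a structure theorem for multilinear forms --- a $4$-linear form a positive proportion of whose bilinear slices have rank $\le K$ has partition rank bounded by a polynomial in $K$ and in the logarithm of the reciprocal of that proportion --- yields (A). This structure theorem is proved by induction on the number of slots, the bilinear base case being exact linear algebra; the induction step exploits that the form is \emph{linear} in each slot, which is what keeps the final bound polynomial.

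Granting (A) and (B), here is the deduction. By (A), $f_{4}=\sum_{i=1}^{r}p_{i}q_{i}$ with each $p_{i},q_{i}$ homogeneous, $deg\left(p_{i}\right)+deg\left(q_{i}\right)=4$, and $r=poly\left(\rho\right)$ (for a homogeneous polynomial, rank and partition rank of the Taylor tensor agree up to a constant factor); by (B), $f_{3}=\sum_{j}\lambda_{j}m_{j}$ with $\lambda_{j}$ linear, $m_{j}$ quadratic, and $poly\left(\rho\right)$ terms. Let $V$ be the common kernel of the homogeneous linear parts of all the degree-$1$ factors occurring in these two presentations, so $codim\left(V\right)=poly\left(\rho\right)$. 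For $y\in V$, whenever one factor of a product $p_{i}q_{i}$ (resp.\ $\lambda_{j}m_{j}$) is linear its discrete derivative $\Delta_{y}$ vanishes on $V$; since $\Delta_{y}\left(gh\right)=g\left(\cdot+y\right)\Delta_{y}h+h\Delta_{y}g$, it follows that each $\Delta_{y}\left(p_{i}q_{i}\right)|_{V}$ is a sum of at most two products of polynomials of degree $<3$, and each $\Delta_{y}\left(\lambda_{j}m_{j}\right)|_{V}$ a single product of polynomials of degree $<2$. Summing these, and absorbing $\Delta_{y}f_{\le2}$ and the residual degree-$\le1$ terms (which do not affect the rank), we get $rank\left(\Delta_{y}f|_{V}\right)=poly\left(\rho\right)$; the restriction lemma above then gives $rank\left(\Delta_{y}f\right)=poly\left(\rho\right)$, as required.

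The step I expect to be the genuine obstacle is (B): decoupling $f_{3}$ from $f_{4}$ with only polynomial loss. (One cannot shortcut this through a rank bound on $f$ itself: the only available bound, from \cite{HS}, is $rank\left(f\right)=poly\left(1/\delta\right)$, exponentially worse than needed.) Unwinding the $U^{3}$-Gowers norm one finds $bias\left(f\right)^{8}\le\mathbb{E}_{h_{1},h_{2},h_{3}}\bigl[e_{p}\bigl(T\left(h_{1},h_{2},h_{3}\right)+\cdots\bigr)\cdot\mathbf{1}\bigl[B\left(h_{1},h_{2},h_{3},\cdot\right)\equiv0\bigr]\bigr]$, the dots being a correction built from $B$ and the lower Taylor tensors; one uses that $B$ already has low partition rank, by (A), to show that the event $\left\{ B\left(h_{1},h_{2},h_{3},\cdot\right)\equiv0\right\}$ is large and structured, and extracts from the resulting estimate a bias-type lower bound that, via the degree-$3$ result of \cite{HS}, forces $rank\left(f_{3}\right)$ low. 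A naive ``peel off a structured piece and restrict, then repeat'' iteration terminates here only after tower-many rounds; keeping the bound polynomial --- by exploiting the linearity of $B$ and $T$, and carefully tracking the coupling between the degree-$3$ and degree-$4$ data --- is the technical heart of \cite{HS} in degree $4$, and the step where one genuinely uses that $deg\left(f\right)=4$.
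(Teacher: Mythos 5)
There is a genuine gap, and it sits in your step (A). First, note that the paper does not prove this lemma at all: it is quoted verbatim as Lemma 4.2 of \cite{HS}, so the relevant comparison is with the argument there. Your step (A) asserts that the quartic Taylor tensor $B$ has partition rank $poly\left(\rho\right)$; since $rank\left(f\right)=rank\left(f_{4}\right)$ and the rank of a homogeneous quartic agrees with the partition rank of its Taylor tensor up to constants, (A) is not a lemma on the way to the result --- it \emph{is} the main theorem of the paper (Theorem \ref{thm:Main} combined with the Kazhdan--Ziegler Nullstellensatz). The ``structure theorem for multilinear forms'' you invoke to prove it --- a $4$-linear form with a $p^{-O\left(\rho\right)}$-fraction of bilinear slices of rank $\le K$ has partition rank polynomial in $K$ and $\rho$ --- is exactly the statement whose polynomial-bound version was open until the Milicevic and Janzer papers cited in the abstract; the remark that the induction ``exploits linearity in each slot, which is what keeps the final bound polynomial'' papers over the entire difficulty (a naive induction loses an exponential at each slot, which is why \cite{HS} only obtained $poly\left(1/\delta\right)$ in degree $4$). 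Step (B) then leans on (A), so the whole proposal is circular in spirit: you are deriving a preparatory lemma from the theorem it is meant to help prove.

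The actual proof is far weaker in its demands and does not touch $f_{4}$ or $f_{3}$ globally. From $bias\left(f\right)^{2}=\mathbb{E}_{y}\,\mathbb{E}_{x}\,e_{p}\left(\Delta_{y}f\left(x\right)\right)\le\mathbb{E}_{y}\,bias\left(\Delta_{y}f\right)$, the set $E$ of directions $y$ with $bias\left(\Delta_{y}f\right)\ge\delta^{2}/2$ has density $\ge\delta^{2}/2$. Each such $\Delta_{y}f$ is a \emph{cubic} with large bias, so the degree-$3$ structure theorem of \cite{HS} (which does carry polylogarithmic bounds) gives $rank\left(\Delta_{y}f\right)=poly\left(\rho\right)$ for all $y\in E$. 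One then upgrades the dense set $E$ to a subspace exactly as in Lemma \ref{lem:Bogolyubov-Chang-lemma.}: $bE-bE$ contains a subspace $V$ of codimension $O\left(\rho\right)$ with $b=O\left(\rho\right)$, and writing $t=y_{1}+\ldots+y_{b}-z_{1}-\ldots-z_{b}$ with all $y_{i},z_{j}\in E$, the telescoping identity expresses $\Delta_{t}f$ as a sum of $2b$ translated derivatives $\Delta_{y_{i}}f,\Delta_{z_{j}}f$, each of rank $poly\left(\rho\right)$; since rank is translation-invariant and subadditive, $rank\left(\Delta_{t}f\right)\le2b\cdot poly\left(\rho\right)=poly\left(\rho\right)$ for every $t\in V$. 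The correct dependency structure is therefore: the degree-$3$ theorem yields this lemma, which yields the main theorem --- not the other way around.
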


\begin{onehalfspace}
\noindent Now we restrict our attention to the subspace that we get
from Lemma \ref{lem:low-rank-subspace}. We know that $\forall x,t\in V$
we have:
\begin{equation}
f\left(x+t\right)-f\left(x\right)=\sum_{i=1}^{n}\alpha_{t}^{i}\left(x\right)P_{t}^{i}\left(x\right)+P_{t}^{0}\left(x\right),\label{eq:presentation}
\end{equation}

\noindent where $\alpha_{t}^{1},\ldots,\alpha_{t}^{n}$ are linear
functions, $P_{t}^{0},\ldots,P_{t}^{n}$ are quadratics, and $n=poly\left(\rho\right)$.
\\

\noindent We will show that this stems from the presence of a small
family of quadratics appearing in many of the derivatives. We want
to work with a high rank\textbf{ }family of quadratics, meaning:
\end{onehalfspace}
\begin{defn}
\begin{onehalfspace}
\noindent Let $\left(Q_{1},\ldots,Q_{N}\right)\subset\mathcal{P}_{2}\left(V\right)$
be a family of quadratics. We say that the family is \textbf{R-regular
}if for any scalars $a_{1},\ldots,a_{N}\in\mathbb{F}$ not all zero,
we have 
\[
rank\left(\sum_{i=1}^{N}a_{i}Q_{i}\right)\geq R.
\]
\end{onehalfspace}
\end{defn}

\begin{onehalfspace}
\noindent We will require the following lemma which allows us to generate
subspaces from positive density sets (Lemma 2.3 in \cite{HS}):
\end{onehalfspace}
\begin{lem}[Bogolyubov-Chang]
\begin{onehalfspace}
\noindent \label{lem:Bogolyubov-Chang-lemma.}Let $V$ be a vector
space and $E\subset V$ such that $\left|E\right|=\mu\cdot\left|V\right|$.
Then there exists $b=O\left(\log\left(1/\mu\right)\right)$ such that
$bE-bE$ contains a subspace $U$ with $codim_{V}\left(U\right)=O\left(\log\left(1/\mu\right)\right)$.
In addition, there exists $C=p^{-poly\left(\log\left(1/\mu\right)\right)}$
such that every element $t\in U$ has at least $C\left|U\right|^{2b-1}$
representations $t=y_{1}+\ldots+y_{b}-z_{1}-\ldots-z_{b}$ where $y_{1},\ldots,y_{b},z_{1},\ldots,z_{b}\in E$.
\end{onehalfspace}
\end{lem}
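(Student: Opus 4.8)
The plan is to run the classical Bogolyubov argument in Fourier space, using Chang's theorem as the one nontrivial input that makes the codimension logarithmic rather than polynomial in $1/\mu$. Write $\widehat{1_E}(\xi)=\mathbb{E}_{x\in V}\,1_E(x)\,e_p(\langle\xi,x\rangle)$ for the Fourier transform of the indicator of $E$, with $\xi$ ranging over the dual space $\widehat V$; then $\widehat{1_E}(0)=\mu$ and Parseval gives $\sum_\xi|\widehat{1_E}(\xi)|^2=\mu$. The starting point is the standard identity: for $t\in V$, the number $r(t)$ of representations $t=y_1+\dots+y_b-z_1-\dots-z_b$ with all $y_i,z_j\in E$ equals $|V|^{2b-1}$ times the value at $t$ of the convolution $1_E\ast\cdots\ast 1_E\ast 1_{-E}\ast\cdots\ast 1_{-E}$ ($b$ copies of each, with $1_{-E}(x):=1_E(-x)$), whose Fourier transform is $\xi\mapsto|\widehat{1_E}(\xi)|^{2b}$; hence by Fourier inversion
\[
r(t)=|V|^{2b-1}\sum_{\xi\in\widehat V}|\widehat{1_E}(\xi)|^{2b}\,e_p(-\langle\xi,t\rangle).
\]
Everything then reduces to bounding this exponential sum from below on a suitable subspace.

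First I would apply Chang's theorem to the large spectrum $\Lambda:=\{\xi\in\widehat V:\ |\widehat{1_E}(\xi)|\ge\tfrac12\mu\}$, obtaining a subspace $W\subseteq\widehat V$ with $\Lambda\subseteq W$ and $\dim W=O(\log(1/\mu))$. Set $U:=W^{\perp}=\{t\in V:\ \langle\xi,t\rangle=0\text{ for all }\xi\in W\}$, so that $codim_V(U)=\dim W=O(\log(1/\mu))$. Fix $t\in U$ and split the sum according to whether $\xi\in\Lambda$. For $\xi\in\Lambda\subseteq W$ the phase is trivial, $e_p(-\langle\xi,t\rangle)=1$, and since $0\in\Lambda$,
\[
\sum_{\xi\in\Lambda}|\widehat{1_E}(\xi)|^{2b}e_p(-\langle\xi,t\rangle)=\sum_{\xi\in\Lambda}|\widehat{1_E}(\xi)|^{2b}\ \ge\ |\widehat{1_E}(0)|^{2b}=\mu^{2b}.
\]
Off $\Lambda$ we have $|\widehat{1_E}(\xi)|<\tfrac12\mu$, so by Parseval
\[
\Bigl|\sum_{\xi\notin\Lambda}|\widehat{1_E}(\xi)|^{2b}e_p(-\langle\xi,t\rangle)\Bigr|\ \le\ \bigl(\tfrac12\mu\bigr)^{2b-2}\sum_{\xi}|\widehat{1_E}(\xi)|^2\ =\ 2^{-(2b-2)}\,\mu^{2b-1}.
\]
Taking $b:=\lceil\,1+\tfrac12\log_2(2/\mu)\,\rceil=O(\log(1/\mu))$ forces $2^{-(2b-2)}\le\tfrac12\mu$, hence $\sum_\xi|\widehat{1_E}(\xi)|^{2b}e_p(-\langle\xi,t\rangle)\ge\tfrac12\mu^{2b}>0$ for every $t\in U$.

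This yields both conclusions at once. Positivity of the sum gives $r(t)>0$ for all $t\in U$, i.e.\ $U\subseteq bE-bE$, with $b$ and $codim_V(U)$ both $O(\log(1/\mu))$. For the quantitative count, using $|V|\ge|U|$,
\[
r(t)\ \ge\ |V|^{2b-1}\cdot\tfrac12\mu^{2b}\ \ge\ \tfrac12\mu^{2b}\,|U|^{2b-1},
\]
and since $b=O(\log(1/\mu))$ we have $\tfrac12\mu^{2b}=p^{-O((\log(1/\mu))^2)}=p^{-poly(\log(1/\mu))}$, so one may take $C:=\tfrac12\mu^{2b}$. I expect the only genuinely hard ingredient to be Chang's theorem (equivalently, Rudin's inequality): the crude Parseval estimate only gives $|\Lambda|\le 4/\mu$ and hence $codim_V(U)=O(1/\mu)$, so it is precisely the logarithmic control on the dimension of the large spectrum that is doing the work, after which the Bogolyubov computation above is routine bookkeeping.
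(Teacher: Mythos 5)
Your proof is correct, and it is the standard Bogolyubov--Chang argument; the paper does not prove this lemma at all but imports it verbatim as Lemma 2.3 of \cite{HS}, where essentially this same Fourier computation (Chang's theorem applied to the $\tfrac12\mu$-large spectrum, followed by the $2b$-fold convolution estimate) is carried out. Your bookkeeping checks out: with $b=\lceil 1+\tfrac12\log_2(2/\mu)\rceil$ the off-spectrum contribution $2^{-(2b-2)}\mu^{2b-1}$ is indeed at most $\tfrac12\mu^{2b}$, giving both the containment $U\subseteq bE-bE$ and the representation count with $C=\tfrac12\mu^{2b}=p^{-poly(\log(1/\mu))}$.
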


\begin{onehalfspace}
\noindent The main proposition we prove in this section is the following:
\end{onehalfspace}
\begin{prop}
\begin{onehalfspace}
\noindent \label{prop: common-quads}Let $f,\,V$ be as above. Then
there exists an R-regular collection of homogenous quadratics $Q_{1},\ldots,Q_{N}$
and a subspace $V_{1}\subseteq V$ such that $\forall x,t\in V_{1}$
we have 
\[
f\left(x+t\right)-f\left(x\right)=\sum_{i=1}^{N}\alpha_{t}^{i}\left(x\right)Q_{i}\left(x\right)+\sum_{i=1}^{m}\beta_{t}^{i}\left(x\right)\gamma_{t}^{i}\left(x\right)\delta_{t}^{i}\left(x\right)+Q_{t}^{0}\left(x\right),
\]

\noindent where $N=poly\left(\rho\right)$, $m=poly\left(\rho\right)$,
and $codim_{V}\left(V_{1}\right)=R\cdot poly\left(\rho\right)$.\\
\end{onehalfspace}
\end{prop}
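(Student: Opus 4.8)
The plan is to track the homogeneous cubic part of each derivative, pin down the common quadratics using the symmetry of mixed second derivatives, and then upgrade from a dense set of directions to a subspace via Lemma~\ref{lem:Bogolyubov-Chang-lemma.}.

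First I would normalise (\ref{eq:presentation}). Splitting each $\alpha_t^i$ and $P_t^i$ into homogeneous parts and multiplying out, every term of degree $\le 2$ in $x$ can be collected into a single quadratic, while the degree-$3$-in-$x$ part of $f(x+t)-f(x)$ becomes
\[
C_t(x)=\sum_{i=1}^n \ell_t^i(x)\,q_t^i(x),
\]
with the $\ell_t^i$ homogeneous linear, the $q_t^i$ homogeneous quadratic, and $n=\mathrm{poly}(\rho)$; note that $C_t$ is, up to a scalar, the directional derivative of the quartic part $f_4$ of $f$, hence linear in $t$, although the presentation itself is not. It is enough to prove the statement with $C_t$ in place of $f(x+t)-f(x)$: restriction to a linear subspace keeps homogeneous cubics homogeneous, and the discarded degree-$\le 2$ part of $f(x+t)-f(x)$ is a single quadratic, which we re-christen $Q_t^0$ at the end.

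Second, write $f_4(x)=B(x,x,x,x)$ with $B$ symmetric $4$-linear. Computing the degree-$2$-in-$x$ part of $\varDelta_s C_t$ from the presentation gives $\sum_i \ell_t^i(x)\,b_t^i(x,s)+\sum_i \ell_t^i(s)\,q_t^i(x)$, where $b_t^i$ is the bilinear form of $q_t^i$; the first sum is a quadratic of rank $\le n$, while $\varDelta_s C_t$ itself has degree-$2$-in-$x$ part $12\,B(x,x,s,t)$. Thus, setting $R_t(s):=\sum_i \ell_t^i(s)\,q_t^i\in W_t:=\langle q_t^1,\dots,q_t^n\rangle$, which is linear in $s$, we obtain $12\,B(\cdot,\cdot,s,t)\equiv R_t(s)$ modulo rank $\le n$, and the symmetry of $B$ in $s,t$ yields the key identity
\[
R_t(s)-R_s(t)=\bigl(\text{a quadratic of rank}\le 2n\bigr),\qquad \forall\,s,t\in V.
\]
Informally: each slice of the Hessian form of $f_4$ lies in $W_t$ modulo bounded rank, and symmetrically in $W_s$.

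Third --- and this is where I expect the real difficulty --- I would use this identity to produce a single space $\mathcal W$ of quadratics with $\dim\mathcal W=\mathrm{poly}(\rho)$ and $r_0=\mathrm{poly}(\rho)$ such that $W_t\subseteq\mathcal W+\{\,\text{rank}\le r_0\,\}$ for a set $E\subseteq V$ of density $\ge p^{-\mathrm{poly}(\rho)}$. The obstruction is that $\{\text{rank}\le r_0\}$ is not a linear subspace --- a space of quadratics each within bounded rank of a fixed bounded-dimensional space can itself be high-dimensional (think of $\mathcal P_1(V)\cdot\ell$ for a single linear form $\ell$) --- so one cannot simply pass to a quotient. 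I would argue greedily: start with $\mathcal W=W_{t_1}$; while more than a $p^{-\mathrm{poly}(\rho)}$ fraction of $t\in V$ satisfy $W_t\not\subseteq\mathcal W+\{\text{rank}\le r_0\}$, absorb one such $W_t$ into $\mathcal W$ and repeat. Termination must come from the identity: once $\mathcal W\supseteq W_{t_1},\dots,W_{t_k}$, for every $t$ the quadratics $R_t(t_1),\dots,R_t(t_k)$ --- hence the part of $W_t$ they span --- lie in $\mathcal W$ modulo rank $\le n$, so an uncovered $W_t$ forces $R_t$ to almost annihilate $\langle t_1,\dots,t_k\rangle$; I would show that this degeneracy cannot persist for a positive-density set of $t$ once $k=\mathrm{poly}(\rho)$, treating separately the degenerate case in which the relevant slices are divisible by few linear forms (which already exhibits low-complexity structure of $f_4$). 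I expect the cleanest formulation is a self-contained ``bilinear regularisation'' lemma for forms valued in quadratics, possibly applied iteratively.

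Finally I would assemble. Lemma~\ref{lem:Bogolyubov-Chang-lemma.} applied to $E$ gives $b=\mathrm{poly}(\rho)$ and a subspace $U\subseteq bE-bE$ with $\mathrm{codim}_V(U)=\mathrm{poly}(\rho)$; since the top-degree part of $f(x+t)-f(x)$ is unchanged by shifts of the argument, the telescoping $t=y_1+\dots+y_b-z_1-\dots-z_b$ with $y_i,z_i\in E$ shows that $C_t$ lies in the space of cubics $\sum_k m_k Q_k$ ($m_k$ linear, $Q_k\in\mathcal W$) modulo cubic rank $\mathrm{poly}(\rho)$, for every $t\in U$. Then I would regularise: for a spanning set $Q_1,\dots,Q_M$ of $\mathcal W$ with $M=\mathrm{poly}(\rho)$, whenever a nontrivial combination $\sum a_iQ_i$ has rank $<R$, write it as $\sum_{j\le R}L_jL_j'$ and pass to $\{L_j=0:j\}$, of codimension $\le R$, where some $Q_i$ becomes dependent on the others and can be dropped; after $\le M$ steps this gives an $R$-regular family $Q_1,\dots,Q_N$ at total cost of codimension $R\cdot\mathrm{poly}(\rho)$, and restriction only improves the covering of the $W_t$. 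Taking $V_1$ to be the final subspace inside $U$ and expanding $\ell_t^i q_t^i=\sum_k\tilde\ell_t^k Q_k+\ell_t^i\cdot(\text{rank}\le r_0\ \text{quadratic})$ turns the correction terms into $m=\mathrm{poly}(\rho)$ products of three linear forms; re-adding the degree-$\le 2$ part of $f(x+t)-f(x)$ as $Q_t^0$ then produces exactly the stated presentation.
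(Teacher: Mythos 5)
Your opening normalisation, the symmetry identity $R_t(s)\equiv R_s(t)$ modulo bounded rank, and the closing assembly (Bogolyubov--Chang applied to $E$, then killing low-rank combinations of the $Q_i$ by restricting to the zero set of the linear forms involved) all match what the paper does. But the heart of the proposition --- producing a single $\mathrm{poly}(\rho)$-dimensional space $\mathcal{W}$ of quadratics that covers $W_t$ modulo bounded rank for a dense set of $t$ --- is exactly the step you leave as a sketch, and the sketch does not close. The termination of your greedy procedure is the problem: if $W_t\not\subseteq\mathcal{W}+\{\mathrm{rank}\le r_0\}$ with $\mathcal{W}\supseteq W_{t_1}+\dots+W_{t_k}$, the identity only tells you that the linear map $s\mapsto(\ell_t^1(s),\dots,\ell_t^n(s))$ fails to surject when restricted to $T=\langle t_1,\dots,t_k\rangle$ --- a single degeneracy condition on $\ell_t$ relative to $T$. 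Absorbing $W_t$ into $\mathcal{W}$ does not visibly destroy this degeneracy for the next uncovered $t'$, so there is no potential function decreasing by a definite amount per step and no bound of $\mathrm{poly}(\rho)$ on the number of iterations. Moreover, the ``degenerate case'' you propose to treat separately (essentially, $B(\cdot,\cdot,s,t)$ of low rank for all $s\in T$ and a dense set of $t$) is not a disposable corner case: low-rank slices is precisely the hypothesis supplied by Lemma \ref{lem:low-rank-subspace}, so that case carries the entire content of the proposition.

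It is worth comparing with how the paper gets around this, because the mechanism is genuinely different. Its iteration lemma removes the arbitrary quadratics one at a time via a dichotomy driven by $\Delta_s\Delta_t f=\Delta_t\Delta_s f$: for each $t$, either the linear coefficient $\beta_t^m$ of the last arbitrary quadratic lies in the span of the others (so that quadratic can be absorbed into the remaining terms), or one can fix a single $s_0$ and a set of $t$ of density $\geq\frac{1}{p^{m}+1}$ for which, after restricting to the subspace where the $\alpha_{s_0}^i$ and $\Delta_{s_0}Q_i$ vanish, $R_t^m$ is pinned down as a fixed quadratic from $\mathrm{span}(Q_1,\ldots,Q_M,P_{s_0}^1,\dots,P_{s_0}^n)$ plus already-controlled terms plus one new low-rank error quadratic. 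Each application costs a $p^{-\mathrm{poly}(\rho)}$ density factor, $\mathrm{poly}(\rho)$ codimension, and one extra low-rank term, and the number of arbitrary quadratics strictly decreases, so $n=\mathrm{poly}(\rho)$ applications suffice before Bogolyubov--Chang is invoked. If you want to salvage your covering approach you must supply the missing bilinear regularisation lemma with an actual termination proof; as written, the proposal has a gap at its central step.
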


\begin{onehalfspace}
\noindent To prove this proposition we will gradually find structure
in the derivatives of $f$ , replacing the arbitrary quadratic polynomials
appearing in Equation (\ref{eq:presentation}) by fixed quadratics
appearing in all the derivatives. We accomplish this by repeatedly
applying the following lemma:
\end{onehalfspace}
\begin{lem}
\begin{onehalfspace}
\noindent Suppose there's a subspace $U\subseteq V$ ,a set $F\subseteq V$,
and fixed quadratics $Q_{1},\ldots,Q_{M}\in\mathcal{P}_{2}\left(V\right)$
such that $\forall x\in U,\,t\in F$ we have:
\begin{align*}
f\left(x+t\right)-f\left(x\right) & =\sum_{i=1}^{m}\beta_{t}^{i}\left(x\right)R_{t}^{i}\left(x\right)+\sum_{i=1}^{M}\gamma_{t}^{i}\left(x\right)Q_{i}\left(x\right)+\sum_{i=1}^{l}\delta_{t}^{i}\left(x\right)q_{t}^{i}\left(x\right)+R_{t}^{0}\left(x\right),
\end{align*}

\noindent where $rank\left(q_{t}^{i}\right)\leq l$ .\\
Then there exists a subspace $W\subseteq U$, a set $E\subseteq F$,
and fixed quadratics $Q_{M+1},\ldots,Q_{M+n}\in\mathcal{P}_{2}\left(V\right)$
such that $\forall x\in W,t\in E$: 
\[
f\left(x+t\right)-f\left(x\right)=\sum_{i=1}^{m-1}\epsilon_{t}^{i}\left(x\right)S_{t}^{i}\left(x\right)+\sum_{i=1}^{M+n}\zeta_{t}^{i}\left(x\right)Q_{i}\left(x\right)+\sum_{i=1}^{l+1}\eta{}_{t}^{i}\left(x\right)p_{t}^{i}\left(x\right)+S_{t}^{0}\left(x\right),
\]

\noindent where $\left|E\right|\geq\frac{1}{p^{m}+1}\left|F\right|$
, $codim_{U}\left(W\right)\leq n+M$ , and $rank\left(p_{t}^{i}\right)\leq l+1$.
\end{onehalfspace}
\end{lem}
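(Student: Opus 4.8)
The plan is to reduce the number of ``unstructured'' product terms $\sum_{i=1}^m \beta_t^i R_t^i$ by one, at the cost of introducing a bounded number of new fixed quadratics $Q_{M+1},\ldots,Q_{M+n}$, passing to a subspace of bounded codimension, and allowing the ``auxiliary'' quadratics $q_t^i$ of controlled rank to grow slightly in rank. First I would focus on the single term $\beta_t^1(x)R_t^1(x)$. The strategy is to understand how this term varies as $x$ ranges over $U$ and $t$ over $F$. Fix a generic $t\in F$ and consider the dependence on $x$: since the left side $f(x+t)-f(x)$ is a cubic in $x$, and everything else in the presentation is accounted for, the quadratic $R_t^1$ and the linear form $\beta_t^1$ are essentially determined up to the ambiguity of the other terms. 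I would take a further discrete derivative in $x$ (say $\Delta_y$ for $y$ in a subspace) to kill the degree and isolate $\beta_t^1$; applying $\Delta_y$ to the cubic product $\beta_t^1 R_t^1$ produces terms of the form $\beta_t^1(y)R_t^1(x)+\beta_t^1(x)\Delta_yR_t^1(x)+\ldots$, and by choosing $y$ appropriately one extracts the quadratic $R_t^1$ itself (up to lower-complexity corrections) as a linear combination.

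The heart of the argument is a density/counting step using the Bogolyubov--Chang lemma (Lemma~\ref{lem:Bogolyubov-Chang-lemma.}). Having expressed the relevant quadratic $R_t^1$ for each $t$ in a large set $F$, I would argue that there is a subset $E\subseteq F$ with $|E|\geq \frac{1}{p^m+1}|F|$ on which this quadratic is ``the same'' in a suitable sense --- more precisely, on which the restriction of $R_t^1$ modulo the span of $Q_1,\ldots,Q_M$ and the $q_t^i$'s falls into a single coset, or on which certain bilinear forms agree. The pigeonhole that produces the factor $\frac{1}{p^m+1}$ strongly suggests counting the possible ``symbols'' attached to each term: for each of the $m$ product terms there are at most $p$ relevant scalar choices plus one overflow bucket, hence $p^m+1$ classes, and one picks the largest. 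Once $E$ is pinned down, the common quadratic (or quadratics, giving the new batch $Q_{M+1},\ldots,Q_{M+n}$ with $n$ bounded) can be peeled off from the $\beta_t^1 R_t^1$ term uniformly over $t\in E$, collapsing that term into the $\sum\zeta_t^i Q_i$ sum. The subspace $W$ arises as the common zero locus of the finitely many new quadratics intersected with $U$, which is why $codim_U(W)\leq n+M$ (the $M$ presumably absorbing an earlier regularization or the dimension cost of making the new quadratics behave); on $W$ the new fixed quadratics either vanish or are linearly dependent on the old ones in a controlled way.

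The remaining bookkeeping is to verify that the other terms transform as claimed: the $\sum\delta_t^i q_t^i$ terms with $rank(q_t^i)\leq l$ become $\sum\eta_t^i p_t^i$ with $rank(p_t^i)\leq l+1$ --- the rank increment by one coming from the fact that restricting a rank-$r$ quadratic to the subspace $W$, or adding one correction term produced by the derivative manipulation above, increases the rank invariant by at most one per step --- and that the product terms $R_t^i$ for $i\geq 2$ survive as $S_t^i$, now only $m-1$ of them. I would also need to track the constant/degenerate quadratic $R_t^0\mapsto S_t^0$, which simply absorbs all the scalar and lower-order junk generated along the way. The main obstacle I anticipate is the isolation step: cleanly extracting a well-defined quadratic from the product $\beta_t^1 R_t^1$ when $\beta_t^1$ may vanish on a subspace or when $R_t^1$ is itself low rank, so that the ``common quadratic'' one extracts is not actually informative. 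Handling this likely requires splitting into cases according to $rank(R_t^1)$ --- if it is already low-rank it gets dumped into the $q_t^i$ bucket (whence the need to allow rank up to $l+1$), and only the high-rank case feeds the Bogolyubov--Chang machinery --- and this case analysis, together with ensuring the subspace codimension and density losses compose correctly over the $m$-fold iteration in Proposition~\ref{prop: common-quads}, is where the real care is needed.
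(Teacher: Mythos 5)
Your proposal correctly identifies the goal (trade one unstructured term $\beta_t^m R_t^m$ for new fixed quadratics plus a small loss in density, codimension, and auxiliary rank), but the mechanism you describe is not the one that works, and two key ideas are missing. First, the source of the new fixed quadratics $Q_{M+1},\ldots,Q_{M+n}$: they are the quadratics $P_{s_0}^1,\ldots,P_{s_0}^n$ appearing in the presentation (Equation (\ref{eq:presentation})) of the derivative $\Delta_{s_0}f$ in a \emph{single fixed direction} $s_0\in U$. The engine is the commutativity identity $\Delta_s\Delta_t f=\Delta_t\Delta_s f$: differentiating the given presentation of $\Delta_t f$ in direction $s$ and equating it with the derivative of the presentation of $\Delta_s f$ in direction $t$ lets one \emph{solve} for $R_t^m(x)$ (up to lower-order terms) as an element of $span(Q_1,\ldots,Q_M,P_{s_0}^1,\ldots,P_{s_0}^n)$ plus products of linear forms plus a quadratic of rank $\leq l+1$, provided $\beta_t^i(s_0)=1_{i=m}$. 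Your plan of ``taking a further derivative $\Delta_y$ to isolate $\beta_t^1$'' gestures at this but never identifies where fixed (i.e.\ $t$-independent) quadratics could come from; without comparing against the presentation of $\Delta_{s_0}f$ there is no reason the extracted quadratic should be independent of $t$.

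Second, the density factor $\frac{1}{p^m+1}$ does not come from Bogolyubov--Chang (which plays no role in this lemma; it is used only later to upgrade $E$ to a subspace in the proof of Proposition \ref{prop: common-quads}), nor from pigeonholing $F$ into $p^m+1$ classes. It comes from a dichotomy: either $\beta_t^m\in span(\beta_t^1,\ldots,\beta_t^{m-1})$ for a $\frac{1}{p^m+1}$-fraction of $t\in F$ (in which case one substitutes and absorbs $R_t^m$ into the remaining $m-1$ terms with no new quadratics and no subspace loss), or for the complementary set of $t$ the affine system $\beta_t^i(s)=1_{i=m}$ has solution density exactly $p^{-m}$ in $s$, whence some fixed $s_0$ works for at least $p^{-m}\left(1-\frac{1}{p^m+1}\right)=\frac{1}{p^m+1}$ of the $t$'s. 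Relatedly, your description of $W$ as the common zero locus of the new \emph{quadratics} is wrong (that is not a subspace and is not what is needed): $W$ is cut out by the $n$ linear forms $\alpha_{s_0}^i$ and the $M$ linear forms $\Delta_{s_0}Q_i$, which is exactly where the bound $codim_U(W)\leq n+M$ comes from; and the rank increment $l\mapsto l+1$ arises from the single extra low-rank correction produced when solving for $R_t^m$, not from restriction to a subspace (restriction cannot increase rank). As written, the proposal would not assemble into a proof without supplying the commutativity argument and the correct dichotomy.
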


\begin{proof}
\begin{onehalfspace}
\noindent During the proof we will ignore lower order terms in our
equations (e.g. a quadratic equation will hold up to some linear function).
Using the identity $\Delta_{t}\Delta_{s}f\left(x\right)=\Delta_{s}\Delta_{t}f\left(x\right)$
for $x,s\in U,\,t\in F$ we get:
\begin{gather*}
\sum_{i=1}^{n}\left[\alpha_{s}^{i}\left(t\right)P_{s}^{i}\left(x\right)+\alpha_{s}^{i}\left(x\right)\Delta_{t}P_{s}^{i}\left(x\right)\right]=\sum_{i=1}^{m}\left[\beta_{t}^{i}\left(s\right)R_{t}^{i}\left(x\right)+\beta_{t}^{i}\left(x\right)\varDelta_{s}R_{t}^{i}\left(x\right)\right]\\
+\sum_{i=1}^{M}\left[\gamma_{t}^{i}\left(s\right)Q_{i}\left(x\right)+\gamma_{t}^{i}\left(x\right)\varDelta_{s}Q_{i}\left(x\right)\right]+\sum_{i=1}^{l}\left[\delta_{t}^{i}\left(s\right)q_{t}^{i}\left(x\right)+\delta_{t}^{i}\left(x\right)\varDelta_{s}q_{t}^{i}\left(x\right)\right].
\end{gather*}

\noindent Setting $Z=\left\{ \left(s,t\right)|\,s\in U,\,t\in F,\,\beta_{t}^{i}\left(s\right)=\begin{cases}
1 & i=m\\
0 & otherwise
\end{cases}\right\} ,$ one of the following must hold:\\
\textbf{Case 1:} $\mathbb{P}_{t\in F}\left[\beta_{t}^{m}\in span\left(\beta_{t}^{1},\ldots,\beta_{t}^{m-1}\right)\right]\geq\frac{1}{p^{m}+1}.$\\
\textbf{Case 2:} $\mathbb{P}_{s\in U,t\in F}\left[\left(s,t\right)\in Z\right]\geq\frac{1}{p^{m}+1}.$\\
This is because if the first inequality doesn't occur, then $A:=\left\{ t\in F|\,\beta_{t}^{m}\in span\left(\beta_{t}^{1},\ldots,\beta_{t}^{m-1}\right)\right\} $
satisfies $\mathbb{P}_{t\in F}\left[t\notin A\right]\geq1-\frac{1}{p^{m}+1}$
so we get: 
\begin{gather*}
\mathbb{P}_{s\in U,t\in F}\left[\left(s,t\right)\in Z\right]=\mathbb{P}_{s\in U,t\in F-A}\left[\left(s,t\right)\in Z\right]\cdot\mathbb{P}_{t\in F}\left[t\notin A\right]\\
\geq p^{-m}\left(1-\frac{1}{p^{m}+1}\right)=\frac{1}{p^{m}+1}.
\end{gather*}
We now analyze both possible cases:\textbf{}\\
\textbf{Case 1: }Suppose $\mathbb{P}_{t\in F}\left[\beta_{t}^{m}\in span\left(\beta_{t}^{1},\ldots,\beta_{t}^{m-1}\right)\right]\geq\frac{1}{p^{m}+1}$
. Setting $E=\left\{ t\in F|\,\beta_{t}^{m}\in span\left(\beta_{t}^{1},\ldots,\beta_{t}^{m-1}\right)\right\} $,
we get that $\left|E\right|\geq\frac{1}{p^{m}+1}\left|F\right|$.
For every $t\in E$ there exist $a_{t}^{1},\ldots,a_{t}^{m-1}\in\mathbb{F}$
such that $\beta_{t}^{m}=\sum_{i=1}^{m-1}a_{t}^{i}\beta_{t}^{i}$
. Plugging this in we get that for all $x\in U,\,t\in E$ we have
\begin{gather*}
f\left(x+t\right)-f\left(x\right)=\sum_{i=1}^{m-1}\beta_{t}^{i}\left(x\right)R_{t}^{i}\left(x\right)+R_{t}^{m}\left(x\right)\sum_{i=1}^{m-1}a_{t}^{i}\beta_{t}^{i}\left(x\right)+\sum_{i=1}^{M}\gamma_{t}^{i}\left(x\right)Q_{i}\left(x\right)+\sum_{i=1}^{l}\delta_{t}^{i}\left(x\right)q_{t}^{i}\left(x\right)+R_{t}^{0}\left(x\right)\\
=\sum_{i=1}^{m-1}\beta_{t}^{i}\left(x\right)\left[R_{t}^{i}\left(x\right)+a_{t}^{i}R_{t}^{m}\left(x\right)\right]+\sum_{i=1}^{M}\gamma_{t}^{i}\left(x\right)Q_{i}\left(x\right)+\sum_{i=1}^{l}\delta_{t}^{i}\left(x\right)q_{t}^{i}\left(x\right)+R_{t}^{0}\left(x\right),
\end{gather*}
which is what we wanted.\textbf{}\\
\textbf{Case 2:} Suppose $\mathbb{P}_{s\in U,t\in F}\left[\left(s,t\right)\in Z\right]\geq\frac{1}{p^{m}+1}$.
Then there exists some $s_{0}\in U$ such that $E:=\left\{ t\in F|\,\left(s_{0},t\right)\in Z\right\} $
satisfies $\left|E\right|\geq\frac{1}{p^{m}+1}\left|F\right|$. For
$x\in U,\,t\in E$ we have:
\begin{gather*}
\sum_{i=1}^{n}\left[\alpha_{s_{0}}^{i}\left(t\right)P_{s_{0}}^{i}\left(x\right)+\alpha_{s_{0}}^{i}\left(x\right)\Delta_{t}P_{s_{0}}^{i}\left(x\right)\right]=R_{t}^{m}\left(x\right)+\sum_{i=1}^{m}\beta_{t}^{i}\left(x\right)\varDelta_{s_{0}}R_{t}^{i}\left(x\right)\\
+\sum_{i=1}^{M}\left[\gamma_{t}^{i}\left(s_{0}\right)Q_{i}\left(x\right)+\gamma_{t}^{i}\left(x\right)\varDelta_{s_{0}}Q_{i}\left(x\right)\right]+\sum_{i=1}^{l}\left[\delta_{t}^{i}\left(s_{0}\right)q_{t}^{i}\left(x\right)+\delta_{t}^{i}\left(x\right)\varDelta_{s_{0}}q_{t}^{i}\left(x\right)\right].
\end{gather*}
 Setting $W=\left\{ x\in U|\,\alpha_{s_{0}}^{i}\left(x\right)=0,\,\varDelta_{s_{0}}Q_{i}\left(x\right)=0\,for\,all\,i\right\} $
we get that for $x\in W,\,t\in E$ we have
\[
R_{t}^{m}\left(x\right)=Q_{t}\left(x\right)-\sum_{i=1}^{m-1}\beta_{t}^{i}\left(x\right)\varDelta_{s_{0}}R_{t}^{i}\left(x\right)-\sum_{i=1}^{l}\delta_{t}^{i}\left(s_{0}\right)q_{t}^{i}\left(x\right)+q_{t}\left(x\right),
\]
 where $Q_{t}\in span\left(Q_{1},\ldots,Q_{M},P_{s_{0}}^{1},\ldots,P_{s_{0}}^{n}\right)$,
and $rank\left(q_{t}\right)\leq l+1$ .\\
Plugging this in we have that for $x\in W,\,t\in E$ 
\begin{gather*}
f\left(x+t\right)-f\left(x\right)=\sum_{i=1}^{m-1}\beta_{t}^{i}\left(x\right)\left[R_{t}^{i}\left(x\right)+\beta_{t}^{m}\left(x\right)\varDelta_{s_{0}}R_{t}^{i}\left(x\right)\right]+\sum_{i=1}^{M+n}\zeta_{t}^{i}\left(x\right)Q_{i}\left(x\right)\\
+\sum_{i=1}^{l}q_{t}^{i}\left(x\right)\left[\delta_{t}^{i}\left(x\right)+\delta_{t}^{i}\left(s_{0}\right)\beta_{t}^{m}\left(x\right)\right]+\beta_{t}^{m}\left(x\right)q_{t}\left(x\right),
\end{gather*}
 where $\zeta_{t}^{i}\left(x\right)$are linear functions. This is
in the desired form.
\end{onehalfspace}
\end{proof}
\begin{onehalfspace}
\noindent We are now ready to prove Proposition \ref{prop: common-quads}.
\end{onehalfspace}
\begin{proof}
\begin{onehalfspace}
\noindent Applying the above lemma $n$ times, we get a subspace $W\subseteq V$
, a set $E\subseteq V$ and quadratics $Q_{1},\ldots,Q_{N}$ such
that for $x\in W,\,t\in E$ we have 
\[
f\left(x+t\right)-f\left(x\right)=\sum_{i=1}^{N}\gamma_{t}^{i}\left(x\right)Q_{i}\left(x\right)+\sum_{i=1}^{l}\delta_{t}^{i}\left(x\right)q_{t}^{i}\left(x\right)+P_{t}^{0}\left(x\right),
\]
 where $codim_{V}\left(W\right),N,l,rank\left(q_{t}^{i}\right),\log\frac{\left|V\right|}{\left|E\right|}$
are all $poly\left(\rho\right)$. \\
Now we want to upgrade the set $E$ to a large subspace. By Lemma
\ref{lem:Bogolyubov-Chang-lemma.}, we can find some $b=poly\left(\rho\right)$
such that $bE-bE$ contains a subspace $U$ with $codim\left(U\right)=poly\left(\rho\right)$
. Using the identity $\varDelta_{t\pm s}f=\varDelta_{t}f\pm\varDelta_{s}f$
(up to lower degree terms) we get that for $x,t\in V_{1}:=U\cap W$
we have 
\[
f\left(x+t\right)-f\left(x\right)=\sum_{i=1}^{k}\gamma_{t}^{i}\left(x\right)Q_{i}\left(x\right)+\sum_{i=1}^{l}\delta_{t}^{i}\left(x\right)q_{t}^{i}\left(x\right)+P_{t}^{0}\left(x\right),
\]
 where the parameters $l,\,rank\left(q_{t}^{i}\right),\,codim_{V}\left(V_{1}\right)$
are all $poly\left(\rho\right)$. Expanding the linear functions appearing
in $q_{t}^{i}$ we get 
\[
f\left(x+t\right)-f\left(x\right)=\sum_{i=1}^{k}\gamma_{t}^{i}\left(x\right)Q_{i}\left(x\right)+\sum_{i=1}^{m}\delta_{t}^{i}\left(x\right)\epsilon_{t}^{i}\left(x\right)\zeta_{t}^{i}\left(x\right)+P_{t}^{0}\left(x\right),
\]
 where $m=poly\left(\rho\right)$. \\
To make $Q_{1},\ldots,Q_{N}$ R-regular, we can get rid of low rank
quadratics in the following fashion:\\
Suppose WLOG we have $a_{1},\ldots,a_{N-1}\in\mathbb{F}$ such that
\[
Q_{N}\left(x\right)=\sum_{i=1}^{N-1}a_{i}Q_{i}\left(x\right)+\sum_{i=1}^{R}\alpha_{i}\left(x\right)\beta_{i}\left(x\right).
\]
 By restricting to $V_{1}'=\left\{ x\in V_{1}|\,\alpha_{i}\left(x\right)=0\,for\,all\,i\right\} $
we reduce our dimension by R at most and for $x,t\in V_{1}'$ we get
\begin{gather*}
\sum_{i=1}^{N}\gamma_{t}^{i}\left(x\right)Q_{i}\left(x\right)=\sum_{i=1}^{N-1}\gamma_{t}^{i}\left(x\right)Q_{i}\left(x\right)+\gamma_{t}^{N}\left(x\right)\sum_{i=1}^{N-1}a_{i}Q_{i}\left(x\right)\\
=\sum_{i=1}^{N-1}\left[\gamma_{t}^{i}\left(x\right)+\gamma_{t}^{N}\left(x\right)a_{i}\right]Q_{i}\left(x\right),
\end{gather*}
which implies 
\[
f\left(x+t\right)-f\left(x\right)=\sum_{i=1}^{N-1}\tilde{\gamma}_{t}^{i}\left(x\right)Q_{i}\left(x\right)+\sum_{i=1}^{m}\delta_{t}^{i}\left(x\right)\epsilon_{t}^{i}\left(x\right)\zeta_{t}^{i}\left(x\right)+P_{t}^{0}\left(x\right).
\]
We can keep doing this until our collection is $R$-regular, overall
reducing the dimension of our subspace by\\
$RN=R\cdot poly\left(\rho\right)$ at most. This completes the proof
of Proposition \ref{prop: common-quads}. 
\end{onehalfspace}
\end{proof}
\begin{onehalfspace}

\section{Vanishing on the zero set of the quadratics}
\end{onehalfspace}

\begin{onehalfspace}
\noindent Now we restrict our attention to the set $X=\left\{ x\in V_{1}|\,Q_{1}\left(x\right)=\ldots=Q_{N}\left(x\right)=0\right\} $.We
will see that $X$ is a well behaved set in terms of counting various
configurations. We introduce here some notation which will be used
in this section:
\end{onehalfspace}
\begin{itemize}
\begin{onehalfspace}
\item For vectors $s,t$ we define $X_{t}:=X\cap\left(X-t\right)$ and $X_{s,t}:=X_{t}\cap X_{s}$.
\item We will use $\left(\cdot,\cdot\right)_{i}$ to denote the bilinear
form associated with the $i-th$ quadratic in our collection, i.e.
$\left(s,t\right)_{i}:=Q_{i}\left(s+t\right)-Q_{i}\left(s\right)-Q_{i}\left(t\right).$
\end{onehalfspace}
\end{itemize}
\begin{onehalfspace}
\noindent By Proposition \ref{prop: common-quads}, we know that for
$t\in V_{1},\,x\in X_{t}$ we have 
\[
f\left(x+t\right)-f\left(x\right)=\sum_{i=1}^{m}\delta_{t}^{i}\left(x\right)\epsilon_{t}^{i}\left(x\right)\zeta_{t}^{i}\left(x\right)+P_{t}^{0}\left(x\right).
\]

\noindent We will now show that by restricting to a large subspace,
our function in fact vanishes on $X$. This stage will comprise two
steps: the first is removing the cubic term in the derivative, and
the second is removing the quadratic term.
\end{onehalfspace}
\begin{onehalfspace}

\subsection{Removing the cubic term}
\end{onehalfspace}

\begin{onehalfspace}
\noindent We will need the following useful claim:
\end{onehalfspace}
\begin{claim}[Independence with respect to linear equations]
\begin{onehalfspace}
\noindent \label{claim:linear equations}Let $U$ be a vector space
and $Q_{1},\ldots,Q_{N}\in\mathcal{P}_{2}\left(U\right)$ a R-regular
collection of quadratics. Let $X=\left\{ x\in U|\,Q_{1}\left(x\right)=\ldots=Q_{N}\left(x\right)=0\right\} $.
Then for any affine subspace $A\subseteq U$ we have 
\[
\left|\mathbb{P}_{x\in U}\left(x\in A\cap X\right)-p^{-N}\mathbb{P}_{x\in U}\left(x\in A\right)\right|\leq p^{-R/2}.
\]
\end{onehalfspace}
\end{claim}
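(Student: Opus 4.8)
The plan is to expand $\mathbb{P}_{x\in U}(x\in A\cap X)$ using the standard Fourier-analytic formula for the indicator of the common zero set of the $Q_i$'s, namely $\mathbf{1}[x\in X] = p^{-N}\sum_{a\in\mathbb{F}^N} e_p\!\left(\sum_{i=1}^N a_i Q_i(x)\right)$. Multiplying by $\mathbf{1}[x\in A]$ and averaging over $x\in U$ gives
\[
\mathbb{P}_{x\in U}(x\in A\cap X) = p^{-N}\sum_{a\in\mathbb{F}^N}\mathbb{E}_{x\in U}\Big[\mathbf{1}[x\in A]\, e_p\big(\textstyle\sum_i a_i Q_i(x)\big)\Big].
\]
The $a=0$ term contributes exactly $p^{-N}\mathbb{P}_{x\in U}(x\in A)$, which is the main term. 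So the whole claim reduces to bounding the contribution of the nonzero $a$, i.e. showing
\[
p^{-N}\sum_{a\neq 0}\Big|\mathbb{E}_{x\in U}\big[\mathbf{1}[x\in A]\, e_p\big(\textstyle\sum_i a_i Q_i(x)\big)\big]\Big| \leq p^{-R/2}.
\]

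First I would reduce the constraint $x\in A$ to an honest linear-subspace constraint. Writing $A = x_0 + W$ for a linear subspace $W\le U$ and substituting $x = x_0 + w$, each $\sum_i a_i Q_i(x_0+w)$ is, as a function of $w\in W$, a quadratic polynomial $\widetilde{Q}_a(w)$ whose homogeneous quadratic part is $\sum_i a_i Q_i|_W$ (the restriction of $\sum_i a_i Q_i$ to $W$). Then $\mathbb{E}_{x\in U}[\mathbf{1}[x\in A]\, e_p(\cdots)] = \frac{|W|}{|U|}\,\mathbb{E}_{w\in W} e_p(\widetilde{Q}_a(w)) = \frac{|W|}{|U|}\operatorname{bias}_{W}(\widetilde{Q}_a)$. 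By Theorem~\ref{thm:quad:bias}, $\operatorname{bias}_W(\widetilde{Q}_a) \leq p^{-\operatorname{rank}_W(\widetilde{Q}_a)/2}$, and the bias of a quadratic depends only on its homogeneous part, so $\operatorname{bias}_W(\widetilde{Q}_a)\leq p^{-r_a/2}$ where $r_a := \operatorname{rank}\!\big((\sum_i a_i Q_i)|_W\big)$.

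The key point is now a rank-transfer estimate: because the collection $(Q_1,\dots,Q_N)$ is $R$-regular on $U$ and $W$ has bounded codimension, restriction to $W$ cannot drop the rank by more than $\operatorname{codim}_U(W)$; more precisely $r_a \geq \operatorname{rank}_U(\sum_i a_i Q_i) - \operatorname{codim}_U(W) \geq R - \operatorname{codim}_U(W)$ for all $a\neq 0$. I should state this as the place where the hypothesis on $A$'s codimension enters — indeed the claim as stated has no codimension bound on $A$, so I would either (i) absorb it by noting the bound is only nontrivial when $\operatorname{codim}_U(W)$ is, say, less than $R/2$, and when $\operatorname{codim}_U(W)\geq R/2$ the stated bound $p^{-R/2}$ is vacuous since the left side of the claim is trivially at most $1$ minus a main term of size $\leq p^{-N}$... — on reflection that is not automatic, so more carefully: if $\operatorname{codim}_U(W)$ is large then $\mathbb{P}_{x\in U}(x\in A)\leq p^{-R/2}$, hence both $\mathbb{P}(x\in A\cap X)$ and $p^{-N}\mathbb{P}(x\in A)$ lie in $[0, p^{-R/2}]$ and their difference is at most $p^{-R/2}$. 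So the claim splits cleanly into the "$W$ has small codimension" regime, handled by the Fourier argument, and the "$W$ has large codimension" regime, handled by this trivial bound. In the small-codimension regime, $r_a \geq R/2$ for every nonzero $a$, whence
\[
p^{-N}\sum_{a\neq 0}\Big|\mathbb{E}_{x\in U}\big[\mathbf{1}[x\in A]\,e_p(\cdots)\big]\Big| \leq p^{-N}\cdot p^{N}\cdot \tfrac{|W|}{|U|}\cdot p^{-R/4} \leq p^{-R/4},
\]
and a small bookkeeping adjustment of constants (using $r_a\geq R$ minus a tolerable loss, or phrasing the regimes around $R/2$ carefully) yields the stated $p^{-R/2}$.

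The main obstacle is getting the exponents to line up to exactly $p^{-R/2}$ rather than some $p^{-cR}$: the crude bound $\sum_{a\neq 0}(\cdots)\leq p^N\max_a(\cdots)$ loses a factor $p^N$, so to recover $p^{-R/2}$ I would want $r_a$ to be at least roughly $R$ (not $R/2$) for generic $a$, or alternatively to sum the bias bound more efficiently — e.g. grouping the $a$'s by the value of $r_a$ and using that few linear combinations have small rank, or simply defining the regularity threshold and the target exponent consistently. A clean route is to replace the target by $p^{-R/2}$ with the understanding that "$R$" in the $R$-regularity hypothesis already incorporates the $+N$ slack; since $N = \operatorname{poly}(\rho)$ and we are free to choose $R$ as large as we like in Proposition~\ref{prop: common-quads} at the cost of enlarging $\operatorname{codim}(V_1)$ by $R\cdot\operatorname{poly}(\rho)$, any bound of the form $p^{-\Omega(R)}$ is equally usable downstream, so I would prove it in the self-consistent form above and, if desired, reconcile the constant by a final relabeling $R \mapsto 2R$.
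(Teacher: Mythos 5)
Your argument is essentially correct but takes a genuinely different route from the paper's, and as you yourself flag, it only delivers a bound of the form $p^{-cR}$ with $c<1/2$ rather than the stated $p^{-R/2}$. The paper avoids both of your complications by Fourier-expanding \emph{both} constraints simultaneously: writing $A=\{x\mid l_{1}(x)=c_{1},\dots,l_{n}(x)=c_{n}\}$, it represents $1_{A\cap X}(x)$ as $\mathbb{E}_{\boldsymbol{a}\in\mathbb{F}^{N},\boldsymbol{b}\in\mathbb{F}^{n}}\,e_{p}\bigl(\sum_{i}a_{i}Q_{i}(x)+\sum_{i}b_{i}(l_{i}(x)-c_{i})\bigr)$, and observes that for $\boldsymbol{a}\neq0$ the phase is a quadratic on all of $U$ whose rank equals $\operatorname{rank}(\sum_{i}a_{i}Q_{i})\geq R$, because adding linear forms is rank-neutral; Theorem \ref{thm:quad:bias} then bounds each such term by $p^{-R/2}|U|$, and since the $\boldsymbol{b}$-variable is averaged rather than summed there is no loss in $\operatorname{codim}(A)$ and no case analysis. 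Your version keeps $1_{A}$ intact and restricts $\sum_{i}a_{i}Q_{i}$ to the translate of $W$, which forces you to invoke the rank-drop-under-restriction estimate $r_{a}\geq R-\operatorname{codim}_{U}(W)$ (this is exactly the unnumbered restriction lemma at the start of Section 2, so it is available) and to split into regimes according to the size of $\operatorname{codim}_{U}(W)$; both regimes work, and the $p^{N}$ from the crude sum-over-$\boldsymbol{a}$ is cancelled by the $p^{-N}$ normalization, but the restriction genuinely can cost $\operatorname{codim}_{U}(W)$ units of rank, so optimizing the threshold still leaves you with an exponent like $R/3$ or $R/4$. That weaker exponent is harmless for every downstream use in the paper (only $p^{-\Omega(R)}$ is needed and $R$ is a free parameter), but if you want the literal statement, the fix is precisely the paper's device: absorb the affine constraints into the phase, where they become linear perturbations of $\sum_{i}a_{i}Q_{i}$ on the full space $U$ and hence do not affect its rank.
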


\begin{proof}
\begin{onehalfspace}
\noindent Write $A=\left\{ x\in U|\,l_{1}\left(x\right)=c_{1},\ldots,l_{n}\left(x\right)=c_{n}\right\} ,$
where $n=codim\left(A\right)$. Then using Fourier analysis we get
\begin{gather*}
\left|A\cap X\right|=\sum_{x\in U}\underset{{\scriptstyle \boldsymbol{a}\in\mathbb{F}^{N},\boldsymbol{b}\in\mathbb{F}^{n}}}{\mathbb{E}}e_{p}\left(\sum_{i=1}^{N}a_{i}Q_{i}\left(x\right)+\sum_{i=1}^{n}b_{i}\left(l_{i}\left(x\right)-c_{i}\right)\right)\\
=p^{-N}\left|A\right|+p^{-N}\sum_{0\neq\boldsymbol{a}\in\mathbb{F}^{N}}\underset{{\scriptstyle \boldsymbol{b}\in\mathbb{F}^{n}}}{\mathbb{E}}\sum_{x\in U}e_{p}\left(\sum_{i=1}^{N}a_{i}Q_{i}\left(x\right)+\sum_{i=1}^{n}b_{i}\left(l_{i}\left(x\right)-c_{i}\right)\right).
\end{gather*}
For any $0\neq\boldsymbol{a}\in\mathbb{F}^{N},\,\boldsymbol{b}\in\mathbb{F}^{n}$
we know that $rank\left(\sum_{i=1}^{N}a_{i}Q_{i}\left(x\right)+\sum_{i=1}^{n}b_{i}\left(l_{i}\left(x\right)-c_{i}\right)\right)\geq R$
so by Theorem \ref{thm:quad:bias} we have 
\begin{gather*}
\left|p^{-N}\sum_{0\neq\boldsymbol{a}\in\mathbb{F}^{N}}\underset{{\scriptstyle \boldsymbol{b}\in\mathbb{F}^{n}}}{\mathbb{E}}\sum_{x\in U}e_{p}\left(\sum_{i=1}^{N}a_{i}Q_{i}\left(x\right)+\sum_{i=1}^{n}b_{i}\left(l_{i}\left(x\right)-c_{i}\right)\right)\right|\leq\\
p^{-N}\sum_{0\neq\boldsymbol{a}\in\mathbb{F}^{N}}\underset{{\scriptstyle \boldsymbol{b}\in\mathbb{F}^{n}}}{\mathbb{E}}\left|\sum_{x\in U}e_{p}\left(\sum_{i=1}^{N}a_{i}Q_{i}\left(x\right)+\sum_{i=1}^{n}b_{i}\left(l_{i}\left(x\right)-c_{i}\right)\right)\right|\leq\\
p^{-N}\sum_{0\neq\boldsymbol{a}\in\mathbb{F}^{N}}\underset{{\scriptstyle \boldsymbol{b}\in\mathbb{F}^{n}}}{\mathbb{E}}p^{-R/2}\left|U\right|\leq p^{-R/2}\left|U\right|.
\end{gather*}
 After plugging this in to the previous equality we get 
\[
\left|\left|A\cap X\right|-p^{-N}\left|A\right|\right|\leq p^{-R/2}\left|U\right|,
\]
as claimed. 
\end{onehalfspace}
\end{proof}
\begin{prop}
\begin{onehalfspace}
\noindent \label{prop:remove-cubic} If $R=poly\left(\rho\right)$
is large enough, then there exists a subspace $V_{2}\subset V_{1}$
and a set $E\subset V_{1}$ such that for $t\in E,\,x\in V_{2}\cap X_{t}$
we have 
\[
f\left(x+t\right)-f\left(x\right)=P_{t}\left(x\right),
\]
 where $codim_{V_{1}}\left(V_{2}\right)$ and $\log\left(\left|V_{1}\right|/\left|E\right|\right)$
are $poly\left(\rho\right)$.
\end{onehalfspace}
\end{prop}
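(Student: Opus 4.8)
The plan is to remove the triple products $\sum_{i=1}^{m}\delta_t^i(x)\epsilon_t^i(x)\zeta_t^i(x)$ from the derivative one at a time, in close analogy with the lemma used to prove Proposition~\ref{prop: common-quads}. At each of the $m=poly(\rho)$ rounds I would pass from the current subspace of $V_1$ to one whose codimension is larger by only $poly(\rho)$, and from the current set of admissible directions $t$ to a subset of density at least $p^{-poly(\rho)}$, in such a way that one further triple product vanishes on the common zero set $X$ of the quadratics (at the cost of possibly enlarging the family $Q_1,\dots,Q_N$). Adding the $m$ codimension losses and multiplying the $m$ density losses then yields $codim_{V_1}(V_2)=poly(\rho)$ and $\log(|V_1|/|E|)=poly(\rho)$, provided $R$ was chosen at the outset to be a sufficiently large $poly(\rho)$ --- larger than the total (bounded, $R$-independent) codimension about to be spent.

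For one round, suppose that on a subspace $U\subseteq V_1$ and a set $F\subseteq V_1$ we have, for $x\in U\cap X_t$ and $t\in F$,
\[ f(x+t)-f(x)=\sum_{i=1}^{k}\delta_t^i(x)\epsilon_t^i(x)\zeta_t^i(x)+P_t(x), \]
with $P_t$ quadratic (working, as the paper does, up to terms of strictly smaller degree). Differentiating in a direction $s\in U$ and using $\Delta_s\Delta_t f=\Delta_t\Delta_s f$, I would compare the degree-two-in-$x$ parts of the two sides; differentiating the $k$-th triple product in $s$ produces the quadratic forms $\delta_t^k(s)\epsilon_t^k(x)\zeta_t^k(x)$, $\epsilon_t^k(s)\delta_t^k(x)\zeta_t^k(x)$ and $\zeta_t^k(s)\delta_t^k(x)\epsilon_t^k(x)$, while the side differentiated in $s$ brings in the common quadratics $Q_1,\dots,Q_N$ (which vanish on $X$) together with data depending only on $s$. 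Following the two-case split of Section~2, either for a density-$p^{-poly(\rho)}$ set of $t$ the form $\zeta_t^k$ lies in the span of the remaining linear forms present, so that substituting this dependency collapses the $k$-th product into the other $k-1$; or a pigeonhole argument fixes a single $s_0\in U$ for which a density-$p^{-poly(\rho)}$ subset $E\subseteq F$ obeys a rigid identity, and then restricting to the common zero set $W\subseteq U$ of the finitely many fixed linear forms obtained by specializing $s=s_0$ (e.g.\ the $\Delta_{s_0}Q_j$ and the $\delta_{s_0}^i,\epsilon_{s_0}^i,\zeta_{s_0}^i$, of total codimension $poly(\rho)$ and independent of $t$) lets us rewrite the $k$-th triple product, modulo $Q_1,\dots,Q_N$ and modulo products of fewer linear forms, so that on $W\cap X_t$ it contributes nothing new.

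The step I expect to be the crux is the passage from an identity valid merely for all $x\in X_{s,t}$ to one valid modulo the ideal generated by $Q_1,\dots,Q_N$: a quadratic or cubic polynomial vanishing on the slice $X_{s,t}$ must, since $Q_1,\dots,Q_N$ is $R$-regular with $R$ a large $poly(\rho)$, lie in that ideal up to a polynomial of bounded rank. This is exactly where Claim~\ref{claim:linear equations} (applied via Theorem~\ref{thm:quad:bias}) enters --- together with the observation that, for all but a $p^{-poly(\rho)}$ fraction of pairs $(s,t)$, the slice $X_{s,t}$ is itself cut out by an essentially regular family, so a counting statement of the same type applies --- and it is what forces the careful accounting of the accumulated codimension against the initial budget $R$. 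After the $m$ rounds one obtains $V_2\subseteq V_1$ and a dense $E\subseteq V_1$ with the asserted bounds, on which $f(x+t)-f(x)$ is quadratic in $x$ for $x\in V_2\cap X_t$; should a more structured $E$ be wanted, one can conclude by applying Lemma~\ref{lem:Bogolyubov-Chang-lemma.}.
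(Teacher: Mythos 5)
Your overall strategy --- iterate a two-case dichotomy extracted from $\Delta_s\Delta_t f=\Delta_t\Delta_s f$, pigeonhole a single $s_0$ in the second case, restrict to the zero set $W$ of the linear data attached to $s_0$, and control all densities via Claim \ref{claim:linear equations} and the $R$-regularity of $Q_1,\dots,Q_N$ --- is the paper's strategy (Lemma \ref{lem:reducing-cubic}). The genuine gap is in your termination argument. You claim each round removes one triple product, so $m$ rounds suffice. Your Case 2 does achieve this: prescribing $\delta_t^k(s_0)=1$ and the remaining $3k-1$ forms to vanish at $s_0$ isolates $\epsilon_t^k(x)\zeta_t^k(x)$, which then equals a linear function on $W\cap X_{t}$, so the $k$-th product degenerates into the quadratic remainder. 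But your Case 1 does not ``collapse the $k$-th product into the other $k-1$'': if $\zeta_t^k=\sum_i c_i\lambda_i$ with $\lambda_i$ ranging over the remaining linear forms, then $\delta_t^k\epsilon_t^k\zeta_t^k=\sum_i c_i\,\delta_t^k\epsilon_t^k\lambda_i$, a sum of up to $3k-1$ \emph{new} triple products, none of which coincides with $\delta_t^i\epsilon_t^i\zeta_t^i$ for $i<k$. The number of triple products is therefore not a decreasing quantity, and ``$m$ rounds'' is unjustified; naively the count grows at each Case-1 step and the induction does not close.

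The repair is exactly why the paper sets up Lemma \ref{lem:reducing-cubic} the way it does: write the cubic term as a full symmetric tensor $\sum_{i,j,k}a_{i,j,k}\gamma_t^i\gamma_t^j\gamma_t^k$ in $n\le 3m$ linear forms, and track the pair (number of forms $n$, lexicographically maximal element of $entries(A)$). Case 1 strictly decreases $n$ (the span of the forms shrinks even though the number of monomials may grow), and Case 2 strictly decreases the lex-maximal entry at fixed $n$; this well-founded quantity terminates the process in at most $(3m)^3=poly(\rho)$ rounds, each costing $poly(\rho)$ codimension and a $p^{-poly(\rho)}$ density factor, which is what lets one fix $R=poly(\rho)$ in advance to cover the total loss. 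Two minor further points: in Case 1 one must also allow the bilinear forms $(t,\cdot)_1,\dots,(t,\cdot)_N$ into the span (harmless, since they are constant on $X_t$); and your ``crux'' is stated too strongly --- the paper never needs a Nullstellensatz-type statement that a polynomial vanishing on $X_{s,t}$ lies in the ideal of the $Q_i$, only the pointwise identities on $X_{s_0,t}\cap W$ together with the counting estimate of Claim \ref{claim:linear equations} showing the relevant set of $s$ is nonempty. The closing Bogolyubov--Chang step is not needed for this proposition; the paper defers it to the next subsection.
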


\begin{onehalfspace}
\noindent To prove this proposition we will gradually shorten the
cubic sum appearing in $f\left(x+t\right)-f\left(x\right)$. For a
symmetric matrix $A\in\mathbb{F}^{n\times n\times n}$, we define
$entries\left(A\right):=\left\{ \left(i,j,k\right)|\,a_{i,j,k}\neq0\right\} $. 
\end{onehalfspace}
\begin{lem}
\begin{onehalfspace}
\noindent \label{lem:reducing-cubic} Suppose there's a symmetric
matrix $A\in\mathbb{F}^{n\times n\times n}$ , a set $F\subseteq V_{1}$
, and a subspace $U\subseteq V_{1}$ satisfying $codim_{V_{1}}\left(U\right)\leq R-8N-4n$
with the following property:\\
For all $t\in F$ there are linear functions $\gamma_{t}^{1},\ldots,\gamma_{t}^{n}$
such that for all $x\in X_{t}\cap U$ we have 
\[
f\left(x+t\right)-f\left(x\right)=\sum_{i,j,k=1}^{n}a_{i,j,k}\gamma_{t}^{i}\left(x\right)\gamma_{t}^{j}\left(x\right)\gamma_{t}^{k}\left(x\right)+P_{t}^{0}\left(x\right).
\]
 \textbf{Then} there exists a symmetric matrix $B\in\mathbb{F}^{n'\times n'\times n'},$
a set $E\subseteq F$, and a subspace $W\subseteq U$ where for all
$t\in E$ we have linear functions $\beta_{t}^{1},\ldots,\beta_{t}^{n'}$
such that for all $x\in X_{t}\cap W$ we have
\[
f\left(x+t\right)-f\left(x\right)=\sum_{i,j,k=1}^{n'}b_{i,j,k}\beta_{t}^{i}\left(x\right)\beta_{t}^{j}\left(x\right)\beta_{t}^{k}\left(x\right)+Q_{t}^{0}\left(x\right).
\]
$B$ satisfies either $n'<n$ or $entries\left(B\right)<entries\left(A\right)$
(by lexicographical ordering),$\left|E\right|\geq\frac{1}{p^{4N+2n}+1}\left|F\right|,$
and $codim_{U}\left(W\right)\leq3m+N.$
\end{onehalfspace}
\end{lem}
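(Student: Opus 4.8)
The plan is to run this as one step of a descent on the pair $(n,entries(A))$, ordered lexicographically, driven by the commutativity of discrete derivatives $\varDelta_{s}\varDelta_{t}f=\varDelta_{t}\varDelta_{s}f$ applied to the presentation in the hypothesis, with Claim~\ref{claim:linear equations} and Theorem~\ref{thm:quad:bias} used throughout to convert identities that a priori hold only on $X$ and its translates into honest polynomial identities. First I would fix $s,t\in F$ and restrict attention to the $s\leftrightarrow t$-symmetric set $Y_{s,t}:=X\cap(X-s)\cap(X-t)\cap(X-s-t)\cap U$, on which $\varDelta_{s}\varDelta_{t}f(x)$ may be expanded either by applying $\varDelta_{s}$ to the formula for $\varDelta_{t}f$ or by applying $\varDelta_{t}$ to the formula for $\varDelta_{s}f$. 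Since each $\gamma_{t}^{i}$ is linear and differencing a cubic (resp. quadratic) lowers its degree, both expansions reduce, modulo a polynomial of degree at most one, to
\[
\varDelta_{s}\varDelta_{t}f(x)\;\equiv\;3\sum_{i,j,k}a_{i,j,k}\gamma_{t}^{i}(x)\gamma_{t}^{j}(x)\gamma_{t}^{k}(s)\;\equiv\;3\sum_{i,j,k}a_{i,j,k}\gamma_{s}^{i}(x)\gamma_{s}^{j}(x)\gamma_{s}^{k}(t).
\]
The point of the hypothesis $codim_{V_{1}}(U)\le R-8N-4n$ is that $\{Q_{i}\}$ stays $(8N+4n)$-regular on $U$; each translate of $X$ adds only $N$ linear conditions inside $U$, so Claim~\ref{claim:linear equations} gives $|Y_{s,t}\cap U|\ge p^{-O(N)}|U|$ and, with Theorem~\ref{thm:quad:bias}, shows the displayed identity is an honest polynomial identity up to $span(Q_{1},\ldots,Q_{N})$, a linear polynomial, and a quadratic of rank $O(N)$ -- for all $s,t\in F$.

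Next I would separate cases according to the fixed tensor $A$. If some nonzero $v\in\mathbb{F}^{n}$ satisfies $\sum_{k}a_{i,j,k}v_{k}=0$ for all $i,j$, I would apply an invertible change of coordinates on $\mathbb{F}^{n}$ sending $v$ to $e_{n}$ -- simultaneously to $A$ and to each tuple $(\gamma_{t}^{1},\ldots,\gamma_{t}^{n})$ -- obtaining a symmetric tensor $B$ in which the coordinate $n$ does not appear, so that $n'=n-1$ with $E=F$ and $W=U$; this is the clean case in which $n$ drops for free. So assume from now on that $v\mapsto(\sum_{k}a_{i,j,k}v_{k})_{i,j}$ is injective, i.e. no nonzero third-slot contraction of $A$ vanishes.

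In the remaining case I would localize. Pigeonholing over $t\in F$ produces $s_{0}\in F$ and a single vector $v_{0}\in\mathbb{F}^{n}$ with $\Gamma_{t}(s_{0})=v_{0}$, where $\Gamma_{t}(x):=(\gamma_{t}^{1}(x),\ldots,\gamma_{t}^{n}(x))$, for all $t$ in a set $E\subseteq F$ of density at least $1/(p^{4N+2n}+1)$ (the $p^{-n}$ here is the cost of fixing the value $v_{0}$, the remaining $p^{-O(N)-n}$ the cost of the zero-set conditions defining $Y_{s_{0},t}$). Restricting to $W:=\{x\in U:\gamma_{s_{0}}^{i}(x)=0,\ \varDelta_{s_{0}}Q_{i}(x)=0\text{ for all }i\}$, with $codim_{U}(W)\le 3m+N$ (absorbing a bounded number of further linear conditions used below), the $\gamma_{s_{0}}$-side of the Step~1 relation vanishes identically on $W$, so for every $t\in E$ and $x\in X_{t}\cap W$ the quadratic $\sum_{i,j}\bigl(\sum_{k}a_{i,j,k}(v_{0})_{k}\bigr)\gamma_{t}^{i}(x)\gamma_{t}^{j}(x)$ vanishes modulo a linear polynomial; since it factors through $\Gamma_{t}$ and hence has rank at most $n$, while $\{Q_{i}\}$ is still highly regular on $W$, Claim~\ref{claim:linear equations} forces it to have rank $O(N)$. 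Writing $c^{0}=(\sum_{k}a_{i,j,k}(v_{0})_{k})_{i,j}$ (a fixed nonzero symmetric matrix when $v_{0}\ne 0$; the degenerate case $v_{0}=0$ is dispatched by a different choice of $s_{0}$) and changing coordinates on $\mathbb{F}^{n}$ so that $v_{0}=e_{n}$, this becomes: the $n$-slice quadratic $\sum_{i,j}a_{i,j,n}\gamma_{t}^{i}(x)\gamma_{t}^{j}(x)$ has rank $O(N)$ on $W$ for all $t\in E$, with fixed nonzero Gram matrix $c^{0}$. If the $\gamma_{t}^{1},\ldots,\gamma_{t}^{n}$ are linearly dependent for a positive fraction of $t\in E$, pigeonholing on their common kernel $K\subseteq\mathbb{F}^{n}$ and one further coordinate change discards the last $\dim K\ge 1$ coordinates of the cubic, giving $n'<n$; otherwise $rank(c^{0})=O(N)$ is a fixed bounded number, and after restricting $W$ by a bounded number of additional linear forms realizing a low-rank factorization of the $n$-slice quadratic on $W$ uniformly in $t$, one eliminates a $\gamma_{t}^{i}$ from that slice and rewrites $\varDelta_{t}f$ on $X_{t}\cap W$ as $\sum b_{i,j,k}\beta_{t}^{i}(x)\beta_{t}^{j}(x)\beta_{t}^{k}(x)+Q_{t}^{0}(x)$ with $entries(B)<entries(A)$ lexicographically.

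The hard part is exactly this last move: derivative symmetry only yields per-$t$ statements, and the real work is to pigeonhole just enough -- on the value $v_{0}=\Gamma_{t}(s_{0})$, and if necessary on the common kernel of the $\gamma_{t}^{i}$ -- that a single coordinate change on the fixed tensor $A$ lowers its complexity simultaneously for every surviving direction $t$, while keeping the regularity of $\{Q_{i}\}$ above the $O(N)$ threshold that Claim~\ref{claim:linear equations} needs at each restriction. The quantitative hypotheses are precisely what this bookkeeping demands: $codim_{V_{1}}(U)\le R-8N-4n$ leaves $\{Q_{i}\}$ regular enough on $U$ that the error $p^{-(4N+2n)}$ in Claim~\ref{claim:linear equations} is dominated by the density $p^{-O(N)}$ of the configurations counted, and this same $p^{-(4N+2n)}$ reappears as the guaranteed density of $E$ inside $F$, while $codim_{U}(W)\le 3m+N$ tracks the linear restrictions used to pin down $s_{0}$ and to realize the low-rank factorization.
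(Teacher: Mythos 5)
Your overall architecture matches the paper's: commute $\varDelta_{s}\varDelta_{t}f$, pin down a single auxiliary direction $s_{0}$, restrict to a subspace $W$ killing the $s_{0}$-side of the resulting identity, and use the surviving quadratic relation among the $\gamma_{t}^{i}$ to decrease $\left(n,entries\left(A\right)\right)$ lexicographically. But two steps as you have set them up do not go through. First, your pigeonhole gives you no control over the value $v_{0}=\Gamma_{t}\left(s_{0}\right)$: you take whatever vector is popular, and the whole reduction then hinges on the contraction $c^{0}=A\left(\cdot,\cdot,v_{0}\right)$ being nonzero --- worse, on its nonzero entries sitting in positions that let you strictly decrease $entries\left(A\right)$ in lex order after substitution. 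Dismissing $v_{0}=0$ by ``a different choice of $s_{0}$'' is exactly the hole: nothing prevents $\Gamma_{t}\left(s\right)$ from concentrating on values killed by $A$, or on values whose contraction introduces new lex-large entries when you substitute. The paper's mechanism here is a genuine dichotomy, not a pigeonhole: fix the lex-maximal $\left(i_{0},j_{0},k_{0}\right)\in entries\left(A\right)$ and ask whether the affine system $\gamma_{t}^{k}\left(s\right)=1_{k=k_{0}}$, $s\in X$, $\left(t,s\right)_{i}=0$ is solvable for $s$ in $U$ with the density predicted by Claim \ref{claim:linear equations}. If not, then $\gamma_{t}^{k_{0}}$ lies in the span of the other $\gamma_{t}^{i}$ and the forms $\left(t,\cdot\right)_{i}$ (which are constant on $X_{t}$), and that by itself yields $n'<n$; if so, the achieved value is forced to equal $e_{k_{0}}$, so the surviving relation involves exactly the products $\gamma_{t}^{i}\gamma_{t}^{j}$ with $\left(i,j,k_{0}\right)\in entries\left(A\right)$, and solving for $\gamma_{t}^{i_{0}}\gamma_{t}^{j_{0}}$ rewrites the lex-maximal entry in terms of lex-smaller ones only. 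Your opening case ($A$ has a third-slot kernel vector) and your later branch on linear dependence of the $\gamma_{t}^{i}$ are fragments of this dichotomy, but they are not wired to the pigeonhole, so the non-degenerate branch is unjustified. (Relatedly, the paper lets $s$ range over the full subspace $U$ and uses the $\sum\delta_{s}^{i}\epsilon_{s}^{i}\zeta_{s}^{i}$ presentation from Proposition \ref{prop: common-quads} for the $s$-side --- that is where $3m$ in $codim_{U}\left(W\right)\leq3m+N$ comes from --- rather than taking $s\in F$ with the cubic presentation; working inside a subspace is what makes the solvability count via Claim \ref{claim:linear equations} clean.)

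Second, your final elimination is off. Once you have $\Gamma_{t}\left(x\right)^{T}c^{0}\Gamma_{t}\left(x\right)\equiv0$ (mod linear) on $X_{t}\cap W$ with $c^{0}$ a fixed nonzero symmetric matrix containing the right entry, the correct move is purely algebraic: solve for $\gamma_{t}^{i_{0}}\left(x\right)\gamma_{t}^{j_{0}}\left(x\right)$ as a combination of the other products plus lower-order terms and substitute into the cubic --- no rank considerations enter. Your detour through ``the quadratic has rank $O\left(N\right)$'' and ``restricting $W$ by additional linear forms realizing a low-rank factorization uniformly in $t$'' does not work: the linear forms in a low-rank factorization of $\Gamma_{t}^{T}c^{0}\Gamma_{t}$ depend on $t$, so they cannot be annihilated by passing to a fixed subspace, and any further restrictions would in any case break the bound $codim_{U}\left(W\right)\leq3m+N$. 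The same objection applies to your proposed use of Claim \ref{claim:linear equations} to upgrade the pointwise identity on $Y_{s,t}$ to a polynomial identity with a rank-$O\left(N\right)$ quadratic error: it is an extra unproved step, and an unnecessary one --- the paper only ever uses the commutator identity pointwise on $X_{s_{0},t}\cap W$, where all quadratic errors are absorbed into the free term $Q_{t}^{0}$.
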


\begin{proof}
\begin{onehalfspace}
\noindent We denote $r:=codim_{V_{1}}\left(U\right)$. \\
As usual, our equations will hold up to lower order terms. Applying
the identity $\varDelta_{s}\varDelta_{t}f=\varDelta_{t}\varDelta_{s}f$
for $t\in F,\,s\in U$ satisfying$\left(s,t\right)_{i}=0\,\forall i\in\left[N\right]$
we get that for $x\in X_{s,t}\cap U$ we have 
\begin{gather*}
\sum_{i=1}^{m}\left[\delta_{s}^{i}\left(t\right)\epsilon_{s}^{i}\left(x\right)\zeta_{s}^{i}\left(x\right)+\delta_{s}^{i}\left(x\right)\epsilon_{s}^{i}\left(t\right)\zeta_{s}^{i}\left(x\right)+\delta_{s}^{i}\left(x\right)\epsilon_{s}^{i}\left(x\right)\zeta_{s}^{i}\left(t\right)\right]\\
=\sum_{i,j,k=1}^{n}a_{i,j,k}\left[\gamma_{t}^{i}\left(s\right)\gamma_{t}^{j}\left(x\right)\gamma_{t}^{k}\left(x\right)+\gamma_{t}^{i}\left(x\right)\gamma_{t}^{j}\left(s\right)\gamma_{t}^{k}\left(x\right)+\gamma_{t}^{i}\left(x\right)\gamma_{t}^{j}\left(x\right)\gamma_{t}^{k}\left(s\right)\right].
\end{gather*}
Now let $\left(i_{0},j_{0},k_{0}\right)\in entries\left(A\right)$
be lexicographically maximal. \\
Setting $Z=\left\{ \left(t,s\right)|\,s\in U\cap X,\,t\in F,\,\left(t,s\right)_{i}=0\,\forall i\in\left[N\right],\,\gamma_{t}^{k}\left(s\right)=\begin{cases}
1 & k=k_{0}\\
0 & otherwise
\end{cases}\right\} ,$ one of the following must hold:\\
\textbf{Case 1:} $\mathbb{P}_{t\in F}\left[\gamma_{t}^{k_{0}}\in span\left(\gamma_{t}^{1},\ldots,\gamma_{t}^{k_{0}-1},\gamma_{t}^{k_{0}+1},\ldots,\gamma_{t}^{n},\left(t,\cdot\right)_{1},\ldots,\left(t,\cdot\right)_{N}\right)\right]\geq\frac{1}{p^{4N+2n}+1}.$
\\
\textbf{Case 2:} $\mathbb{P}_{s\in U,t\in F}\left[\left(t,s\right)\in Z\right]\geq\frac{1}{p^{4N+2n}+1}.$\\
This is because if the first inequality doesn't occur, then setting

\noindent 
\begin{align*}
L & :=\left\{ t\in F|\,\gamma_{t}^{k_{0}}\in span\left(\gamma_{t}^{1},\ldots,\gamma_{t}^{k_{0}-1},\gamma_{t}^{k_{0}+1},\ldots,\gamma_{t}^{n},\left(t,\cdot\right)_{1},\ldots,\left(t,\cdot\right)_{N}\right)\right\} ,
\end{align*}
we get
\begin{gather*}
\mathbb{P}_{s\in U,t\in F}\left[\left(t,s\right)\in Z\right]=\mathbb{P}_{s\in U,t\in F-L}\left[\left(t,s\right)\in Z\right]\cdot\mathbb{P}_{t\in F}\left[t\notin L\right]\\
\geq\left(p^{-N}p^{-\left(N+n\right)}-p^{-\left(R-r\right)/2}\right)\left(1-\frac{1}{p^{4N+2n}+1}\right)\geq\frac{1}{p^{4N+2n}+1},
\end{gather*}
where we used the fact that $Q_{1},\ldots,Q_{N}\in\mathcal{P}_{2}\left(U\right)$
is an $R-r\geq8N+4n$ regular collection and applied Claim \ref{claim:linear equations}
to any fixed $t\in F-B$ (For the second inequality we also use the
fact that $x-x^{2}\geq x^{2}$ for $0\leq x\leq0.5$).We analyze both
possible cases:\textbf{}\\
\textbf{Case 1: }Suppose $\mathbb{P}_{t\in F}\left[\gamma_{t}^{k_{0}}\in span\left(\gamma_{t}^{1},\ldots,\gamma_{t}^{k_{0}-1},\gamma_{t}^{k_{0}+1},\ldots,\gamma_{t}^{n},\left(t,\cdot\right)_{1},\ldots,\left(t,\cdot\right)_{N}\right)\right]\geq\frac{1}{p^{4N+2n}+1}$
. Then for any $t\in L$ there exist $c_{t}^{1},\ldots,c_{t}^{n},d_{t}^{1},\ldots,d_{t}^{N}$
such that 
\[
\gamma_{t}^{k_{0}}=\sum_{k\neq k_{0}}c_{t}^{i}\gamma_{t}^{i}+\sum_{i=1}^{N}d_{t}^{i}\left(t,\cdot\right)_{i}.
\]
 Note that for $x\in X_{t}$ we have $\left(t,x\right)_{i}=Q_{i}\left(x+t\right)-Q_{i}\left(x\right)-Q_{i}\left(t\right)=-Q_{i}\left(t\right)$
so the functions $\left(t,\cdot\right)_{i}$ are constant on $X_{t}$
and therefore are swallowed by the lower order term. So for all $t\in L,\,x\in X_{t}\cap U$
we can express $f\left(x+t\right)-f\left(x\right)$ without $\gamma_{t}^{k_{0}}$,
i.e. for $E=L$ we get the desired result with $n'<n.$\\
\textbf{Case 2: }Suppose $\mathbb{P}_{s\in U,t\in F}\left[\left(t,s\right)\in Z\right]\geq\frac{1}{p^{4N+2n}+1}$.
Then there exists some $s_{0}\in U$ such that $\mathbb{P}_{t\in F}\left[\left(t,s_{0}\right)\in Z\right]\geq\frac{1}{p^{4N+2n}+1}$.
Let $E=\left\{ t\in F|\,\left(t,s_{0}\right)\in Z\right\} $. For
all $t\in E,\,x\in X_{s_{0},t}\cap U$ we have 
\begin{gather*}
\sum_{i=1}^{m}\left[\delta_{s_{0}}^{i}\left(t\right)\epsilon_{s_{0}}^{i}\left(x\right)\zeta_{s_{0}}^{i}\left(x\right)+\delta_{s_{0}}^{i}\left(x\right)\epsilon_{s_{0}}^{i}\left(t\right)\zeta_{s_{0}}^{i}\left(x\right)+\delta_{s_{0}}^{i}\left(x\right)\epsilon_{s_{0}}^{i}\left(x\right)\zeta_{s_{0}}^{i}\left(t\right)\right]\\
=\sum_{i,j,k=1}^{n}a_{i,j,k}\left[1_{i=k_{0}}\gamma_{t}^{j}\left(x\right)\gamma_{t}^{k}\left(x\right)+\gamma_{t}^{i}\left(x\right)1_{j=k_{0}}\gamma_{t}^{k}\left(x\right)+\gamma_{t}^{i}\left(x\right)\gamma_{t}^{j}\left(x\right)1_{k=k_{0}}\right].
\end{gather*}
Setting $W=\left\{ x\in U|\,\delta_{s_{0}}^{i}\left(x\right)=\epsilon_{s_{0}}^{i}\left(x\right)=\zeta_{s_{0}}^{i}\left(x\right)=0,\,\left(s_{0},x\right)_{i}=0\,for\,all\,i\right\} $
, we have $codim_{U}\left(W\right)\leq3m+N$ and for all $t\in E,\,x\in X_{t}\cap W$
(Using the fact that this implies $x\in X_{s_{0}}$) we get:
\[
\sum_{i,j,k=1}^{n}a_{i,j,k}\left[1_{i=k_{0}}\gamma_{t}^{j}\left(x\right)\gamma_{t}^{k}\left(x\right)+\gamma_{t}^{i}\left(x\right)1_{j=k_{0}}\gamma_{t}^{k}\left(x\right)+\gamma_{t}^{i}\left(x\right)\gamma_{t}^{j}\left(x\right)1_{k=k_{0}}\right]=0.
\]
Now, since $a_{i_{0},j_{0},k_{0}}\neq0$, this means that we can express
$\gamma_{t}^{i_{0}}\left(x\right)\gamma_{t}^{j_{0}}\left(x\right)$
as a linear combination of the other $\gamma_{t}^{i}\left(x\right)\gamma_{t}^{j}\left(x\right)$
for which $\left(i,j,k_{0}\right)\in entries\left(A\right)$. Rewriting
$f\left(x+t\right)-f\left(x\right)$ in this fashion the resulting
symmetric matrix $B\in\mathbb{F}^{n\times n\times n}$ satisfies $entries\left(B\right)<entries\left(A\right)$. 
\end{onehalfspace}
\end{proof}
\begin{onehalfspace}
\noindent We are now ready to prove Proposition \ref{prop:remove-cubic}.
\end{onehalfspace}
\begin{proof}
\begin{onehalfspace}
\noindent Asumming our quadratics are R-regular with $R=poly\left(\rho\right)$
large enough, we can apply Lemma \ref{lem:reducing-cubic} $\left(3m\right)^{3}$
times since every time the condition $codim_{V_{1}}\left(U\right)\leq R-8N-4n$
will be met every time. After these repeated applications we'll be
left with a set $E\subset V_{1}$ and a subspace $V_{2}\subset V_{1}$
such that $\forall t\in E,\,x\in X_{t}\cap V_{2}$ we have
\[
f\left(x+t\right)-f\left(x\right)=P_{t}\left(x\right),
\]

\noindent where $P_{t}\left(x\right)$ is some quadratic function.
By the bounds in the lemma, both $codim_{V_{1}}\left(V_{2}\right)$
and $\log\left(\left|V_{1}\right|/\left|E\right|\right)$ are $poly\left(\rho\right).$
\end{onehalfspace}
\end{proof}
\begin{onehalfspace}

\subsection{Removing the quadratic term and completing the proof}
\end{onehalfspace}

\begin{onehalfspace}
\noindent Proposition \ref{prop:remove-cubic} shows that, restricted
to $X$, our function $f$ behaves somewhat like a cubic function.
We will try to make this notion more concrete. 
\end{onehalfspace}
\begin{defn}
\begin{onehalfspace}
\noindent Let $\boldsymbol{h}\in V_{1}^{d}$. We say that $\boldsymbol{h}$
is \textbf{admissible} if for $n\in\left[N\right],\,i,j\in\left[d\right],\,i\neq j$
we have $\left(h_{i},h_{j}\right)_{n}=0$. 
\end{onehalfspace}
\end{defn}

\begin{rem*}
\begin{onehalfspace}
\noindent If there exists $x\in V_{1}$ such that $\left\{ x+\omega\cdot\boldsymbol{h}|\,\omega\in\left\{ 0,1\right\} ^{d}\right\} \subset X$
then $\boldsymbol{h}$ is necessarily admissible.
\end{onehalfspace}
\end{rem*}
\begin{defn}
\begin{onehalfspace}
\noindent Let $F\subset V_{1}$ be a subset, $W\subset V_{1}$ a subspace.
We say that $f$ is $\left(F,\,W\right)-cubic$ if $\forall t\in F,\,\boldsymbol{h}\in W^{3}$
such that $\left(t,\boldsymbol{h}\right)$ is admissible, we have
\[
\varDelta_{h_{1}}\varDelta_{h_{2}}\varDelta_{h_{3}}\varDelta_{t}f=0.
\]

\noindent If this holds for $\varepsilon$-a.e. admissible $\left(t,\boldsymbol{h}\right)\in F\times W^{3}$
we say that $f$ is $\varepsilon$-a.e. $\left(F,\,W\right)-cubic$. 
\end{onehalfspace}
\end{defn}

\begin{onehalfspace}
\noindent In order to apply the tools of Fourier analysis, the set
of admissible parallelepipeds must be large.
\end{onehalfspace}
\begin{claim}[Density of admissible parallelepipeds]
\begin{onehalfspace}
\noindent \label{claim:admissible-density} Let $F\subset V_{1}$
be a subset and $W\subset V_{1}$ a subspace with $\mu=\left|F\right|/\left|V_{1}\right|$
and $r=codim_{V_{1}}\left(W\right)$. Then

\noindent 
\[
\mathbb{P}_{t\in V_{1},\boldsymbol{h}\in V_{1}^{3}}\left(\left(t,\boldsymbol{h}\right)\in F\times W^{3}\,and\,is\,admissible\right)\geq p^{-6N-3r}\mu.
\]
\end{onehalfspace}
\end{claim}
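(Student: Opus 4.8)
The plan is to count, via Fourier analysis on $V_1$, the number of triples $(t,\boldsymbol{h})\in V_1\times V_1^3$ such that $t\in F$, all three $h_i\in W$, and $(t,\boldsymbol{h})$ is admissible, i.e. $(h_i,h_j)_n=0$ and $(t,h_i)_n=0$ for all $n\in[N]$ and all distinct indices $i,j\in[3]$. The admissibility conditions are a family of $6N$ linear equations in the coordinates of $(t,\boldsymbol{h})$ (for each $n\in[N]$: the three equations $(h_1,h_2)_n=0$, $(h_1,h_3)_n=0$, $(h_2,h_3)_n=0$ and the three equations $(t,h_1)_n=0$, $(t,h_2)_n=0$, $(t,h_3)_n=0$, using that each $(\cdot,\cdot)_n$ is bilinear, hence linear in each argument once the other is fixed). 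The membership $h_i\in W$ is $r$ linear conditions per $h_i$, so $3r$ in all. Naively one would like to say these $6N+3r$ conditions cut down the probability by exactly $p^{-6N-3r}$; the content of the claim is that this lower bound survives even though the "equations" are not genuinely linear (the bilinear forms couple the variables) and some of them may be degenerate.

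First I would set up the indicator of $F$ via its Fourier expansion $1_F(t)=\mathbb{E}_{\xi}\widehat{1_F}(\xi)e_p(\xi\cdot t)$, and write the indicators of the $6N+3r$ linear/bilinear constraints as averages $\mathbb{E}_{a}e_p(a\cdot(\text{constraint}))$ over the relevant dual variables. Interchanging sums, the quantity to be bounded becomes $\mathbb{E}_{t,\boldsymbol{h}}$ of a product of exponentials; isolating the $a=0$, $b=0$ term in each of the constraint averages produces the main term $p^{-6N-3r}\mu$, and the remaining terms correspond to at least one nonzero dual coefficient. The key point is that every such remaining term is nonnegative: for fixed nonzero dual parameters, the inner expectation over $(t,\boldsymbol{h})$ of $e_p$ of a bilinear-plus-linear form in $(t,\boldsymbol h)$ is, by completing the "square" / using that $\mathbb{E}_z e_p(\ell(z)) = 1_{\ell\equiv 0}$ together with the self-pairing structure of the forms $(\cdot,\cdot)_n$, a nonnegative real number — concretely, the admissibility pairings are symmetric, so the generic fibre-counting argument shows that the number of admissible $(t,\boldsymbol{h})$ with $t\in F$, $\boldsymbol h\in W^3$ is an honest nonnegative-coefficient Fourier sum, giving the lower bound $p^{-6N-3r}\mu$ directly without needing to control the error terms at all. (Alternatively one can peel off the conditions one index at a time: fix $t\in F$ and $h_1,h_2\in W$ admissible — a set of density at least $p^{-\text{(earlier count)}}$ by induction — and observe that $h_3$ must lie in a coset of the subspace of $W$ cut out by the linear equations $(h_i,h_3)_n=0$ and $(t,h_3)_n=0$, which has codimension at most $2N+N=3N$ inside $W$ plus the $r$ for $h_3\in W$; multiplying the successive factors $p^{-N},p^{-(2N+r)},\dots$ through and being careful that the relevant cosets are nonempty yields exactly $p^{-6N-3r}\mu$.)

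The main obstacle I anticipate is handling potentially degenerate constraints, i.e. the possibility that for some $t$ (or some $h_1$) a pairing $(t,\cdot)_n$ vanishes identically as a linear functional, so that the corresponding "equation" is either vacuous or inconsistent. In the vacuous case the true count only goes up, which is harmless for a lower bound; one must just make sure the bookkeeping of the exponent $-6N-3r$ is an overcount of the real codimension rather than an undercount, which it is, since we are always intersecting with at most that many linear conditions. The inconsistent case cannot arise here because we only ever need $(t,\cdot)_n$ to vanish on the $h_i$'s and $(h_i,\cdot)_n$ to vanish on the $h_j$'s, and the zero vector always satisfies these, so the relevant cosets (being homogeneous linear conditions on $\boldsymbol h$ for fixed $t$, then on $t$) always contain $0$ and are nonempty. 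Assembling the per-index density bounds $p^{-N}$ (for $h_1\in W$: actually $p^{-r}$), $p^{-(r+N)}$ (for $h_2$), $p^{-(r+2N)}$ (for $h_3$), together with the density $\mu$ of $F$ and the $N$ conditions $(t,h_i)_n=0$ absorbed appropriately, and checking the exponents sum to $6N+3r$, completes the proof.
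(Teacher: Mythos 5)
Your parenthetical ``alternative'' argument is exactly the paper's proof: condition on $t\in F$ (density $\mu$), then choose $h_1,h_2,h_3$ one at a time, noting that at each stage the constraints ($h_i\in W$, plus $(t,h_i)_n=0$ and $(h_j,h_i)_n=0$ for $j<i$, $n\in[N]$) are homogeneous linear conditions on $h_i$, hence cut out a nonempty subspace of codimension at most $r+iN$, giving the successive factors $p^{-(N+r)},p^{-(2N+r)},p^{-(3N+r)}$ whose product with $\mu$ is the claimed bound. Your closing paragraph correctly identifies the only subtlety (degenerate constraints can only increase the count, and the cosets are genuine subspaces containing $0$, so they are never empty), and your exponent bookkeeping, though awkwardly phrased, sums correctly to $6N+3r$.

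The one thing I would not let stand is your primary Fourier-analytic route: the assertion that after isolating the main term ``every such remaining term is nonnegative'' by some ``self-pairing structure'' is not justified and is not obviously true — the cross terms $e_p(a\cdot(h_i,h_j)_n)$ averaged over the relevant variables are complex in general, and a genuine lower bound by that route would require actually controlling them (which is what Claim \ref{claim:linear equations} does in a different context, using $R$-regularity — an assumption you do not have and do not need here). Since your sequential argument is self-contained and complete, simply drop the Fourier sketch and lead with the conditioning argument.
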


\begin{proof}
\begin{onehalfspace}
\noindent We calculate 
\begin{gather*}
\mathbb{P}_{t\in V_{1},\boldsymbol{h}\in V_{1}^{3}}\left(\left(t,\boldsymbol{h}\right)\in F\times W^{3}\,and\,is\,admissible\right)=\\
\mathbb{P}_{t\in V_{1},\boldsymbol{h}\in V_{1}^{3}}\left(\left(t,\boldsymbol{h}\right)\in F\times W^{3}\,and\,is\,admissible|\,t\in F\right)\cdot\mathbb{P}_{t\in V_{1}}\left(t\in F\right)\\
\geq p^{-N-r}p^{-2N-r}p^{-3N-r}\mu=p^{-6N-3r}\mu.
\end{gather*}

\noindent The inequality follows from choosing $h_{1},h_{2},h_{3}\in W$
one after the other such that $\left(t,h_{1}\right),\left(t,h_{1},h_{2}\right),\left(t,h_{1},h_{2}.h_{3}\right)$
are all admissible.
\end{onehalfspace}
\end{proof}
\begin{onehalfspace}
\noindent We can now make the notion of cubic behavior more tangible.
Set $\mu=\left|E\right|/\left|V_{1}\right|,r=codim_{V_{1}}\left(V_{2}\right).$
\end{onehalfspace}
\begin{lem}
\begin{onehalfspace}
\noindent Let $\varepsilon>0$. If $R=poly\left(\rho,\log_{p}\left(1/\varepsilon\right)\right)$
is large enough, then $f$ is $\varepsilon$-a.e. $\left(E,\,V_{2}\right)-cubic$.
\end{onehalfspace}
\end{lem}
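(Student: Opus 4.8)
The plan is to deduce $\varepsilon$-a.e. cubic behavior from Proposition~\ref{prop:remove-cubic} via a Cauchy--Schwarz / Fourier argument, exactly in the spirit of the quadratic case. First I would record what Proposition~\ref{prop:remove-cubic} gives us: for $t\in E$ and $x\in V_2\cap X_t$ we have $f(x+t)-f(x)=P_t(x)$ for some quadratic $P_t$. Taking three further discrete derivatives in admissible directions $h_1,h_2,h_3\in V_2$ kills the quadratic $P_t$, so $\varDelta_{h_1}\varDelta_{h_2}\varDelta_{h_3}\varDelta_t f(x)=0$ \emph{whenever} $x$ together with the appropriate vertices of the parallelepiped all lie in $X$. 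The subtlety is that $\varDelta_{h_1}\varDelta_{h_2}\varDelta_{h_3}\varDelta_t f$ is itself a polynomial of degree at most one (since $\deg f=4$), i.e.\ an affine function of $x$; so to conclude it is \emph{identically} zero it is enough to know that it vanishes at a positive-density set of $x$ (density exceeding $p^{-1}$ suffices, or one extra derivative in a generic direction). Thus the real task is a counting statement: for $\varepsilon$-a.e.\ admissible $(t,\boldsymbol h)\in E\times V_2^3$, the set of $x$ for which the eight relevant vertices $x+\omega\cdot(h_1,h_2,h_3)$, shifted appropriately so that each lands in the domain of the identity $f(\cdot+t)-f(\cdot)=P_t(\cdot)$, all lie in $X_t\cap V_2$, has density bounded below.

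The key steps, in order: (1)~Fix $t\in E$. For the identity $f(x+t)-f(x)=P_t(x)$ to be usable at all eight vertices of the $\boldsymbol h$-parallelepiped we need, for each $\omega\in\{0,1\}^3$, that $x+\omega\cdot\boldsymbol h\in X_t\cap V_2$, i.e.\ that $x+\omega\cdot\boldsymbol h\in X$ and $x+\omega\cdot\boldsymbol h+t\in X$. Equivalently the whole $4$-dimensional parallelepiped with edges $h_1,h_2,h_3,t$ must have all $16$ vertices in $X$ (and all in $V_2$, which costs only codimension). (2)~Count such configurations using Claim~\ref{claim:linear equations}: the condition ``all $16$ vertices in $X$'' is a system of linear and quadratic equations in $x$, and because $Q_1,\dots,Q_N$ form an $R$-regular family, an iterated application of Claim~\ref{claim:linear equations} (or rather its proof, building the parallelepiped vertex by vertex as in Claim~\ref{claim:admissible-density}) shows that the number of such $x$ is $p^{-cN}|V_2|$ up to an error $p^{-R/2}|V_2|$, provided $(t,\boldsymbol h)$ is admissible. (3)~Hence, as long as $R=\mathrm{poly}(\rho,\log_p(1/\varepsilon))$ is large enough that $p^{-R/2}$ is much smaller than $p^{-cN}$ (which is $p^{-\mathrm{poly}(\rho)}$) and also much smaller than $\varepsilon$ times the density $p^{-6N-3r}\mu$ of admissible $(t,\boldsymbol h)\in E\times V_2^3$ from Claim~\ref{claim:admissible-density}, we get: for all but an $\varepsilon$-fraction of admissible $(t,\boldsymbol h)$, there is at least one valid $x$ (in fact many), and for any such $x$ the identity forces $\varDelta_{h_1}\varDelta_{h_2}\varDelta_{h_3}\varDelta_t f(x)=0$. (4)~Since $\varDelta_{h_1}\varDelta_{h_2}\varDelta_{h_3}\varDelta_t f$ is affine in $x$ and the set of valid $x$ has density $\geq p^{-cN}-p^{-R/2}>1/p$, it must vanish identically; therefore $f$ is $\varepsilon$-a.e.\ $(E,V_2)$-cubic.

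I would organize the write-up around a small helper lemma: ``for admissible $(t,\boldsymbol h)\in V_1\times V_2^3$, the number of $x\in V_2$ with all $16$ vertices of the parallelepiped generated by $t,h_1,h_2,h_3$ lying in $X$ is $p^{-N}\,p^{-\#\{\text{independent linear constraints}\}}|V_2|+O(p^{-R/2}|V_2|)$'', proved exactly like Claim~\ref{claim:linear equations} by Fourier expansion over the quadratic characters $Q_i$ and the linear characters coming from the vertex constraints, using $R$-regularity of $Q_1,\dots,Q_N$ restricted to $V_2$ (which is $(R-\mathrm{codim}_{V_1}(V_2))$-regular, still $\mathrm{poly}(\rho)$) together with Theorem~\ref{thm:quad:bias}. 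Then the counting in step~(3) is a Fubini / averaging argument: integrate the ``number of valid $x$'' over admissible $(t,\boldsymbol h)\in E\times V_2^3$; the main term is $\geq p^{-cN}$ on every admissible tuple while the total error is at most $p^{-R/2}$ per tuple, so the set of admissible tuples with \emph{zero} valid $x$ has relative density at most $p^{-R/2}/p^{-cN}\leq\varepsilon$ once $R$ is large enough.

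The main obstacle I anticipate is bookkeeping rather than conceptual: one has to be careful that the identity from Proposition~\ref{prop:remove-cubic} is available at \emph{every} vertex simultaneously --- it holds for $x'\in V_2\cap X_{t}$, i.e.\ one needs $x'\in X$ and $x'+t\in X$ --- so applying it at the eight points $x+\omega\cdot\boldsymbol h$ forces all sixteen points $x+\omega\cdot\boldsymbol h$ and $x+\omega\cdot\boldsymbol h+t$ into $X$, which is why admissibility of the \emph{full} tuple $(t,h_1,h_2,h_3)$ (not just of $\boldsymbol h$) enters, and why the density loss is $p^{-O(N)}$ with an absolute constant in the exponent. A secondary subtlety is that $P_t$ is merely quadratic, not homogeneous quadratic, but three derivatives annihilate it regardless, so this causes no trouble; and the residual affine ambiguities (``equations hold up to lower order terms'') are exactly handled by the observation that the fourth-order mixed derivative is affine and vanishes on a set of density $>1/p$.
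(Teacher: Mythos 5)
Your overall strategy coincides with the paper's: for $\varepsilon$-a.e.\ admissible $(t,\boldsymbol h)\in E\times V_2^3$ find one base point $x\in V_2$ whose whole parallelepiped lies in $X$, then apply Proposition~\ref{prop:remove-cubic} at the relevant vertices so that three derivatives annihilate the quadratic $P_t$. (The paper streamlines your step~(1) by observing that, thanks to admissibility, it suffices to place only the five points $x,x+t,x+h_1,x+h_2,x+h_3$ in $X$; the remaining eleven vertices then lie in $X$ automatically, which keeps the Fourier expansion to $5N$ frequencies and avoids the dependent-constraint bookkeeping you worry about.) Two points in your execution do not close as written. First, $\varDelta_{h_1}\varDelta_{h_2}\varDelta_{h_3}\varDelta_t f$ has degree $4-4=0$: it is a \emph{constant}, not an affine function of $x$. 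This matters because your claimed inequality $p^{-cN}-p^{-R/2}>1/p$ is false --- $N=\mathrm{poly}(\rho)$, so the density of valid base points is far below $1/p$ --- and an affine function vanishing on a set of density $p^{-cN}$ need not vanish identically. Since the fourth derivative is constant, a single valid $x$ suffices (this is exactly what the paper uses), but your step~(4) as stated is both unnecessary and unobtainable.

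Second, and more substantively, your helper lemma asserts a \emph{pointwise} error bound: for every admissible $(t,\boldsymbol h)$ the count of valid $x$ is the main term plus $O\!\left(p^{-R/2}\left|V_2\right|\right)$. This is not available. After Fourier expansion, a nonzero frequency tuple $(\alpha,\beta,\gamma,\delta,\epsilon)$ with $\alpha+\beta+\gamma+\delta+\epsilon=0$ yields an exponent that is affine in $x$ with coefficients depending on $(t,\boldsymbol h)$; for degenerate $(t,\boldsymbol h)$ that affine phase vanishes and the exponential sum has modulus $1$. Regularity of the $Q_i$ controls these sums only \emph{on average} over $(t,\boldsymbol h)$, which is why the paper squares, applies Cauchy--Schwarz in $(t,\boldsymbol h)$ to get $\mathbb{E}_{t,\boldsymbol h}\left|\mathbb{E}_x(\cdots)\right|\leq p^{-(R-r)/2}$, and then runs Markov plus a union bound over the $p^{5N}$ frequencies; this averaging is the sole source of the exceptional $\varepsilon$-fraction in the statement. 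Your step~(3) derives the exceptional set by dividing a per-tuple error by the main term, which is not a coherent inference: if the error really were $p^{-R/2}$ for every tuple, there would be no exceptional set at all and the lemma would hold everywhere rather than $\varepsilon$-a.e. Replace the pointwise claim by a second-moment estimate over $(t,\boldsymbol h)$ followed by Markov, and the argument matches the paper's.
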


\begin{proof}
\begin{onehalfspace}
\noindent We need to show that for a.e. admissible $\left(t,\boldsymbol{h}\right)\in E\times V_{2}^{3}$
we can find $x\in V_{2}$ such that\\
$\left\{ x+\omega\cdot\left(t,\boldsymbol{h}\right)|\,\omega\in\left\{ 0,1\right\} ^{4}\right\} \subset X$.
This is enough because if we find suitable $x$ then by Proposition
\ref{prop:remove-cubic} we get 
\[
\varDelta_{h_{1}}\varDelta_{h_{2}}\varDelta_{h_{3}}\varDelta_{t}f=\varDelta_{h_{1}}\varDelta_{h_{2}}\varDelta_{h_{3}}P_{t}\left(x\right)=0.
\]
 Since $\left(t,\boldsymbol{h}\right)$ is admissible, it's sufficient
to find $x\in V_{2}$ such that $x,x+t,x+h_{1},x+h_{2},x+h_{3}\in X$
and automatically we get $\left\{ x+\omega\cdot\left(t,\boldsymbol{h}\right)|\,\omega\in\left\{ 0,1\right\} ^{4}\right\} \subset X.$

\noindent By Fourier analysis, the density of such $x$ is
\begin{gather*}
\underset{{\scriptstyle x\in V_{2}}}{\mathbb{E}}1_{X}\left(x\right)1_{X}\left(x+t\right)1_{X}\left(x+h_{1}\right)1_{X}\left(x+h_{2}\right)1_{X}\left(x+h_{3}\right)=\\
\underset{{\scriptstyle x\in V_{2}}}{\mathbb{E}}\underset{{\scriptstyle \alpha,\beta,\gamma,\delta,\epsilon\in\mathbb{F}^{N}}}{\mathbb{E}}e_{p}\left[\sum_{i=1}^{N}\left(\alpha_{i}Q_{i}\left(x\right)+\beta_{i}Q_{i}\left(x+t\right)+\gamma_{i}Q_{i}\left(x+h_{1}\right)+\delta_{i}Q_{i}\left(x+h_{2}\right)+\epsilon_{i}Q_{i}\left(x+h_{3}\right)\right)\right]=\\
p^{-5N}+p^{-5N}\sum_{{\scriptstyle \alpha,\beta,\gamma,\delta,\epsilon\in\mathbb{F}^{N}}\,not\,all\,zero}\underset{{\scriptstyle x\in V_{2}}}{\mathbb{E}}e_{p}\left[\sum_{i=1}^{N}\left(\alpha_{i}Q_{i}\left(x\right)+\beta_{i}Q_{i}\left(x+t\right)+\gamma_{i}Q_{i}\left(x+h_{1}\right)+\delta_{i}Q_{i}\left(x+h_{2}\right)+\epsilon_{i}Q_{i}\left(x+h_{3}\right)\right)\right].
\end{gather*}

\noindent To show that this sum is positive a.e., it's enough to show
that for a.e. admissible $\left(t,\boldsymbol{h}\right)\in E\times V_{2}^{3}$,
we have 
\[
\left|\underset{{\scriptstyle x\in V_{2}}}{\mathbb{E}}e_{p}\left[\sum_{i=1}^{N}\left(\alpha_{i}Q_{i}\left(x\right)+\beta_{i}Q_{i}\left(x+t\right)+\gamma_{i}Q_{i}\left(x+h_{1}\right)+\delta_{i}Q_{i}\left(x+h_{2}\right)+\epsilon_{i}Q_{i}\left(x+h_{3}\right)\right)\right]\right|\leq\frac{1}{2}p^{-5N},
\]
 whenever$\alpha,\beta,\gamma,\delta,\epsilon\in\mathbb{F}^{N}$ are
not all zero. To see this, we calculate
\begin{gather*}
\left(\underset{{\scriptstyle t\in V_{1},\,\boldsymbol{h}\in V_{2}^{3}}}{\mathbb{E}}\left|\underset{{\scriptstyle x\in V_{2}}}{\mathbb{E}}e_{p}\left[\sum_{i=1}^{N}\left(\alpha_{i}Q_{i}\left(x\right)+\beta_{i}Q_{i}\left(x+t\right)+\gamma_{i}Q_{i}\left(x+h_{1}\right)+\delta_{i}Q_{i}\left(x+h_{2}\right)+\epsilon_{i}Q_{i}\left(x+h_{3}\right)\right)\right]\right|\right)^{2}\leq\\
\underset{{\scriptstyle t\in V_{1},\,\boldsymbol{h}\in V_{2}^{3}}}{\mathbb{E}}\left|\underset{{\scriptstyle x\in V_{2}}}{\mathbb{E}}e_{p}\left[\sum_{i=1}^{N}\left(\alpha_{i}Q_{i}\left(x\right)+\beta_{i}Q_{i}\left(x+t\right)+\gamma_{i}Q_{i}\left(x+h_{1}\right)+\delta_{i}Q_{i}\left(x+h_{2}\right)+\epsilon_{i}Q_{i}\left(x+h_{3}\right)\right)\right]\right|^{2}=\\
\underset{{\scriptstyle x,y\in V_{2}}}{\mathbb{E}}e_{p}\left[\sum_{i=1}^{N}\alpha_{i}\left(Q_{i}\left(x\right)-Q_{i}\left(y\right)\right)\right]\underset{{\scriptstyle t\in V_{1}}}{\mathbb{E}}e_{p}\left[\sum_{i=1}^{N}\beta_{i}\left(Q_{i}\left(x+t\right)-Q_{i}\left(y+t\right)\right)\right]\underset{{\scriptstyle h_{1}\in V_{2}}}{\mathbb{E}}e_{p}\left[\sum_{i=1}^{N}\gamma_{i}\left(Q_{i}\left(x+h_{1}\right)-Q_{i}\left(y+h_{1}\right)\right)\right]\cdot\\
\underset{{\scriptstyle h_{2}\in V_{2}}}{\mathbb{E}}e_{p}\left[\sum_{i=1}^{N}\delta_{i}\left(Q_{i}\left(x+h_{2}\right)-Q_{i}\left(y+h_{2}\right)\right)\right]\underset{{\scriptstyle h_{3}\in V_{2}}}{\mathbb{E}}e_{p}\left[\sum_{i=1}^{N}\epsilon_{i}\left(Q_{i}\left(x+h_{3}\right)-Q_{i}\left(y+h_{3}\right)\right)\right]
\end{gather*}

\noindent If $\beta,\gamma,\delta,\epsilon$ are all zero, then $\alpha\neq0$
and the above expression is 
\[
\underset{{\scriptstyle x,y\in V_{2}}}{\mathbb{E}}e_{p}\left[\sum_{i=1}^{N}\alpha_{i}\left(Q_{i}\left(x\right)-Q_{i}\left(y\right)\right)\right]=bias\left(\sum_{i=1}^{N}\alpha_{i}Q_{i}\right)^{2}\leq p^{-\left(R-r\right)}.
\]

\noindent If $\beta,\gamma,\delta,\epsilon$ are not all zero, then
the above expression is bounded above by
\begin{multline*}
\underset{{\scriptstyle u\in V_{2}}}{\mathbb{E}}\left|\underset{{\scriptstyle t\in V_{1}}}{\mathbb{E}}e_{p}\left[\sum_{i=1}^{N}\beta_{i}\left(Q_{i}\left(t+u\right)-Q_{i}\left(t\right)\right)\right]\right|\cdot\left|\underset{{\scriptstyle h_{1}\in V_{2}}}{\mathbb{E}}e_{p}\left[\sum_{i=1}^{N}\gamma_{i}\left(Q_{i}\left(h_{1}+u\right)-Q_{i}\left(h_{1}\right)\right)\right]\right|\cdot\\
\left|\underset{{\scriptstyle h_{2}\in V_{2}}}{\mathbb{E}}e_{p}\left[\sum_{i=1}^{N}\delta_{i}\left(Q_{i}\left(h_{2}+u\right)-Q_{i}\left(h_{2}\right)\right)\right]\right|\cdot\left|\underset{{\scriptstyle h_{3}\in V_{2}}}{\mathbb{E}}e_{p}\left[\sum_{i=1}^{N}\epsilon_{i}\left(Q_{i}\left(h_{3}+u\right)-Q_{i}\left(h_{3}\right)\right)\right]\right|\leq p^{-\left(R-r\right)/2}.
\end{multline*}

\noindent So whenever $\alpha,\beta,\gamma,\delta,\epsilon\in\mathbb{F}^{N}$
are not all zero, we have
\[
\underset{{\scriptstyle t\in V_{1},\boldsymbol{h}\in V_{2}^{3}}}{\mathbb{E}}\left|\underset{{\scriptstyle x\in V_{2}}}{\mathbb{E}}e_{p}\left[\sum_{i=1}^{N}\left(\alpha_{i}Q_{i}\left(x\right)+\beta_{i}Q_{i}\left(x+t\right)+\gamma_{i}Q_{i}\left(x+h_{1}\right)+\delta_{i}Q_{i}\left(x+h_{2}\right)+\epsilon_{i}Q_{i}\left(x+h_{3}\right)\right)\right]\right|\leq p^{-\left(R-r\right)/2}.
\]

\noindent It follows that 
\[
\underset{{\scriptstyle t\in V_{1},\boldsymbol{h}\in V_{2}^{3}}}{\mathbb{P}}\left(\left|\underset{{\scriptstyle x\in V_{2}}}{\mathbb{E}}e_{p}\left[\sum_{i=1}^{N}\left(\alpha_{i}Q_{i}\left(x\right)+\beta_{i}Q_{i}\left(x+t\right)+\gamma_{i}Q_{i}\left(x+h_{1}\right)+\delta_{i}Q_{i}\left(x+h_{2}\right)+\epsilon_{i}Q_{i}\left(x+h_{3}\right)\right)\right]\right|\geq\frac{1}{2}p^{-5N}\right)\leq2p^{5N-\left(R-r\right)/2}.
\]
Setting $A:=\left\{ \left(t,\boldsymbol{h}\right)\in V_{1}\times V_{2}^{3}|\exists x\in V_{2}\,such\,that\,x,x+t,x+h_{1},x+h_{2},x+h_{3}\in X\right\} ,$and
taking the union over $\alpha,\beta,\gamma,\delta,\epsilon\in\mathbb{F}^{N}$
which are not all zero we get 
\[
\mathbb{P}_{t\in V_{1},\boldsymbol{h}\in V_{2}^{3}}\left(\left(t,\boldsymbol{h}\right)\notin A\right)\leq2p^{10N-\left(R-r\right)/2}.
\]
By Claim \ref{claim:admissible-density}, we find that 
\begin{align*}
\mathbb{P}_{t\in E,\boldsymbol{h}\in V_{2}^{3}}\left(\varDelta_{h_{1}}\varDelta_{h_{2}}\varDelta_{h_{3}}\varDelta_{t}f\neq0|\,\left(t,\boldsymbol{h}\right)\,is\,admissible\right) & \leq\frac{2}{\mu}p^{10N-\left(R-r\right)/2}p^{6N}=p^{C-R/2},
\end{align*}
 where $C=poly\left(\rho\right).$ This proves the claim.
\end{onehalfspace}
\end{proof}
\begin{onehalfspace}
\noindent In order to upgrade the set $E$ of ``good'' differences
to a subspace, we will use Lemma \ref{lem:Bogolyubov-Chang-lemma.}.
Applying the lemma with $E\subset V_{1}$, we denote the guaranteed
subspace by $U$ and set $V_{3}=U\cap V_{2}.$ Then $codim_{V_{1}}\left(V_{3}\right)$
is $poly\left(\rho\right).$ 
\end{onehalfspace}
\begin{lem}
\begin{onehalfspace}
\noindent Let $\varepsilon>0$. If $R=poly\left(\rho,\log_{p}\left(1/\varepsilon\right)\right)$
is large enough, then $f$ is $\varepsilon$-a.e. $\left(V_{3},\,V_{3}\right)-cubic$.
\end{onehalfspace}
\end{lem}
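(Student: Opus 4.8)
The plan rests on a structural observation. Since $\deg f=4$, the fourth difference $\varDelta_{h_1}\varDelta_{h_2}\varDelta_{h_3}\varDelta_{t}f$ is a constant function of $x$, and as a function of the directions it is $\mathbb{F}_p$-multilinear and symmetric in $t,h_1,h_2,h_3$: the cube difference $\varDelta_{h_1}\varDelta_{h_2}\varDelta_{h_3}f$ has degree at most $1$, hence is affine in $x$, so differencing once more along $t$ returns its linear part evaluated at $t$, which is additive and $\mathbb{F}_p$-homogeneous in $t$; symmetry of iterated differences promotes this to linearity in all four slots. In particular, for a fixed admissible triple $\boldsymbol{h}\in V_3^3$ the map $\lambda_{\boldsymbol{h}}(t):=\varDelta_{h_1}\varDelta_{h_2}\varDelta_{h_3}\varDelta_{t}f$ is a linear functional on $V_1$, so on any subspace it either vanishes identically or vanishes on exactly a $1/p$-fraction. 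Writing $W_{\boldsymbol{h}}:=\bigcap_{i\in\{1,2,3\},\,n\in[N]}\ker(h_i,\cdot)_n$ (of codimension at most $3N$ in $V_1$), the directions $t$ that complete $\boldsymbol{h}$ to an admissible quadruple in $V_3^4$ are exactly those in $V_3\cap W_{\boldsymbol{h}}$. Thus the lemma is equivalent to the assertion that, for all but an $\varepsilon$-fraction of admissible $\boldsymbol{h}\in V_3^3$, the functional $\lambda_{\boldsymbol{h}}$ vanishes on the whole subspace $V_3\cap W_{\boldsymbol{h}}$ --- not merely on a $1/p$-fraction of it.

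To get this I would combine the previous lemma (whose Fourier proof applies verbatim with $\boldsymbol{h}$ restricted to $V_3^3\subseteq V_2^3$) with the linearity above. Put $E_{\boldsymbol{h}}:=\{y\in E\cap W_{\boldsymbol{h}}:\varDelta_{h_1}\varDelta_{h_2}\varDelta_{h_3}\varDelta_{y}f=0\}$. When $y\in W_{\boldsymbol{h}}$ the quadruple $(y,\boldsymbol{h})$ is automatically admissible, so by the previous lemma and Markov, for all but an $O(\sqrt{\varepsilon'})$-fraction of admissible $\boldsymbol{h}$, all but an $O(\sqrt{\varepsilon'})$-fraction of $y\in E\cap W_{\boldsymbol{h}}$ lies in $E_{\boldsymbol{h}}$. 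Now $\lambda_{\boldsymbol{h}}$ vanishes on every element of $E_{\boldsymbol{h}}$ and is linear, hence vanishes on $\mathrm{span}(E_{\boldsymbol{h}})\subseteq W_{\boldsymbol{h}}$. Therefore it suffices to show that, for all but an $\varepsilon$-fraction of admissible $\boldsymbol{h}$, one has $\mathrm{span}(E_{\boldsymbol{h}})=W_{\boldsymbol{h}}$, i.e.\ $E_{\boldsymbol{h}}$ is contained in no hyperplane of $W_{\boldsymbol{h}}$; then $\lambda_{\boldsymbol{h}}$ vanishes on $W_{\boldsymbol{h}}\supseteq V_3\cap W_{\boldsymbol{h}}$, which is what is needed.

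The heart of the proof, and the step I expect to be the main obstacle, is this last distributional claim: that for a.e.\ admissible $\boldsymbol{h}$ the $\boldsymbol{h}$-compatible good directions $E_{\boldsymbol{h}}$ are spread over the slab $W_{\boldsymbol{h}}$ rather than trapped in a proper subspace. The mechanism available is that $W_{\boldsymbol{h}}$ and its hyperplanes are cut out by the bilinear forms of the $R$-regular family $Q_1,\dots,Q_N$; choosing $R$ to be a large enough $\mathrm{poly}(\rho,\log_p(1/\varepsilon))$ and running a Fourier/counting argument of the kind used in Claims \ref{claim:linear equations} and \ref{claim:admissible-density}, one gets that for all but a $p^{-\Omega(R)}$-fraction of admissible $\boldsymbol{h}$ the slab $W_{\boldsymbol{h}}$ --- and each hyperplane inside it --- is equidistributed relative to $E$ (this is the point where one may first need a preliminary $\mathrm{poly}(\rho)$-codimension restriction to make $E$ itself linearly uniform, since $E$ is an arbitrary dense set produced by the earlier steps). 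For such $\boldsymbol{h}$, $|E\cap W_{\boldsymbol{h}}|$ is comparable to $p^{-3N}|E|$ while $|E\cap H|<|E\cap W_{\boldsymbol{h}}|$ for every hyperplane $H$ of $W_{\boldsymbol{h}}$; taking $\varepsilon'$ small (at the cost of further enlarging $R$) forces $|E_{\boldsymbol{h}}|>|E\cap H|$ for each such $H$, so $E_{\boldsymbol{h}}$ fits in no hyperplane. Collecting exceptional sets then yields that $\lambda_{\boldsymbol{h}}$ vanishes on $V_3\cap W_{\boldsymbol{h}}$ outside an $\varepsilon$-fraction of admissible $\boldsymbol{h}\in V_3^3$, which is the lemma; all codimension and density losses along the way are $\mathrm{poly}(\rho)$, so the bounds $\mathrm{codim}(V_3),N=\mathrm{poly}(\rho)$ persist.
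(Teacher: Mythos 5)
Your opening reduction is essentially the paper's: since $\deg f=4$, the map $t\mapsto\varDelta_{h_{1}}\varDelta_{h_{2}}\varDelta_{h_{3}}\varDelta_{t}f$ is additive in $t$, so it suffices to express each $t\in V_{3}\cap W_{\boldsymbol{h}}$ in terms of ``good'' directions coming from $E$. The gap is in how you produce such expressions. You reduce to showing that $E_{\boldsymbol{h}}$ spans $W_{\boldsymbol{h}}$, and for that you assert that for a.e.\ $\boldsymbol{h}$ \emph{every} hyperplane of $W_{\boldsymbol{h}}$ is equidistributed relative to $E$. But those hyperplanes are cut out by arbitrary linear functionals, not by the bilinear forms $\left(\cdot,\cdot\right)_{i}$, so the $R$-regularity of $Q_{1},\ldots,Q_{N}$ gives no control over them. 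Moreover $E$ is an arbitrary set of density $\mu=p^{-poly\left(\rho\right)}$ obtained by pigeonholing: it could be entirely contained in a subspace $S$ of codimension $\log_{p}\left(1/\mu\right)=poly\left(\rho\right)$, in which case $E_{\boldsymbol{h}}\subseteq S\cap W_{\boldsymbol{h}}$ lies in a hyperplane of $W_{\boldsymbol{h}}$ for every $\boldsymbol{h}$ with $W_{\boldsymbol{h}}\not\subseteq S$, and your spanning claim fails outright. The suggested fix --- a preliminary $poly\left(\rho\right)$-codimension restriction making $E$ ``linearly uniform'' --- cannot work: to beat every hyperplane you would need the nontrivial Fourier coefficients of $1_{E}$ to be well below $\mu p^{-3N}$, and no bounded-codimension restriction turns an arbitrary set of density $p^{-poly\left(\rho\right)}$ into one that is uniform at that scale (the counterexample above survives any such restriction).

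The paper's way around exactly this obstacle is Lemma \ref{lem:Bogolyubov-Chang-lemma.} (Bogolyubov--Chang): instead of showing that the good directions span, it extracts a subspace $U\subseteq bE-bE$ of codimension $poly\left(\rho\right)$ such that \emph{every} $t\in U$ has at least $C\left|U\right|^{2b-1}$ representations $t=r_{1}+\ldots+r_{b}-r_{b+1}-\ldots-r_{2b}$ with all $r_{i}\in E$. This quantitative abundance of representations replaces your spanning claim. A Fourier computation --- which only needs equidistribution with respect to the constraints $\left(r_{i},h_{j}\right)_{n}=0$, and these \emph{are} governed by the $R$-regular family --- shows that for a.e.\ admissible $\boldsymbol{h}$ a proportion $p^{-D}$ of these representations is $\boldsymbol{h}$-admissible; applying the previous lemma with $\varepsilon'\ll p^{-D}$ then leaves at least one representation in which every $\varDelta_{h_{1}}\varDelta_{h_{2}}\varDelta_{h_{3}}\varDelta_{r_{i}}f$ vanishes, and additivity finishes. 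You should replace your hyperplane-counting step by this appeal to Bogolyubov--Chang; the rest of your outline (Markov over the previous lemma, collecting exceptional sets, tracking $poly\left(\rho\right)$ losses) then goes through.
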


\begin{proof}
\begin{onehalfspace}
\noindent Since $\varDelta_{h_{1}}\varDelta_{h_{2}}\varDelta_{h_{3}}\varDelta_{s+s'}f=\varDelta_{h_{1}}\varDelta_{h_{2}}\varDelta_{h_{3}}\varDelta_{s}f+\varDelta_{h_{1}}\varDelta_{h_{2}}\varDelta_{h_{3}}\varDelta_{s'}f$,
it's enough to show that for a.e. admissible $\left(t,\boldsymbol{h}\right)\in V_{3}^{4}$
we can find many representations $t=r_{1}+\ldots+r_{c}-r_{c+1}-\ldots-r_{2c}$
such that $r_{1},\ldots,r_{2c}\in E$ and $\left(r_{i},\boldsymbol{h}\right)$
is admissible for every $i\in\left[2c-1\right]$ (in which case $\left(r_{2c},\boldsymbol{h}\right)$
must also be admissible). Call such a representation an $\boldsymbol{h}$-admissible
representation. By Fourier analysis, the density of such representations
is:
\begin{gather*}
\underset{{\scriptstyle \boldsymbol{\boldsymbol{r}}\in V_{1}^{2c-1}}}{\mathbb{E}}1_{E}\left(\sum_{i=1}^{c}r_{i}-\sum_{i=c+1}^{2c-1}r_{i}-t\right)\prod_{i=1}^{2c-1}1_{E}\left(r_{i}\right)\prod_{j=1}^{N}1_{\left(r_{i},h_{1}\right)_{j}=0}1_{\left(r_{i},h_{2}\right)_{j}=0}1_{\left(r_{i},h_{3}\right)_{j}=0}=\\
\underset{{\scriptstyle \boldsymbol{a,b,c}\in\mathbb{F}^{\left(2c-1\right)\times N}}}{\mathbb{E}}\underset{{\scriptstyle \boldsymbol{\boldsymbol{r}}\in V_{1}^{2c-1}}}{\mathbb{E}}1_{E}\left(\sum_{i=1}^{c}r_{i}-\sum_{i=c+1}^{2c-1}r_{i}-t\right)\prod_{i=1}^{2c-1}1_{E}\left(r_{i}\right)e_{p}\left[\sum_{i=1}^{2c-1}\sum_{j=1}^{N}\left(a_{i,j}\left(r_{i},h_{1}\right)_{j}+b_{i,j}\left(r_{i},h_{2}\right)_{j}+c_{i,j}\left(r_{i},h_{3}\right)_{j}\right)\right]=\\
p^{-3N\left(2c-1\right)}\underset{{\scriptstyle \boldsymbol{\boldsymbol{r}}\in V_{1}^{2c-1}}}{\mathbb{E}}1_{E}\left(\sum_{i=1}^{c}r_{i}-\sum_{i=c+1}^{2c-1}r_{i}-t\right)\prod_{i=1}^{2c-1}1_{E}\left(r_{i}\right)+p^{-3N\left(2c-1\right)}\cdot\\
\sum_{\boldsymbol{a,b,c}\in\mathbb{F}^{\left(2c-1\right)\times N}\,not\,all\,zero}\underset{{\scriptstyle \boldsymbol{\boldsymbol{r}}\in V_{1}^{2c-1}}}{\mathbb{E}}1_{E}\left(\sum_{i=1}^{c}r_{i}-\sum_{i=c+1}^{2c-1}r_{i}-t\right)\prod_{i=1}^{2c-1}1_{E}\left(r_{i}\right)e_{p}\left[\sum_{i=1}^{2c-1}\sum_{j=1}^{N}\left(a_{i,j}\left(r_{i},h_{1}\right)_{j}+b_{i,j}\left(r_{i},h_{2}\right)_{j}+c_{i,j}\left(r_{i},h_{3}\right)_{j}\right)\right].
\end{gather*}
By Lemma \ref{lem:Bogolyubov-Chang-lemma.}, for every $t\in V_{3}$
we have 
\[
p^{-3N\left(2c-1\right)}\underset{{\scriptstyle \boldsymbol{\boldsymbol{r}}\in V_{1}^{2c-1}}}{\mathbb{E}}1_{E}\left(\sum_{i=1}^{c}r_{i}-\sum_{i=c+1}^{2c-1}r_{i}-t\right)\prod_{i=1}^{2c-1}1_{E}\left(r_{i}\right)\geq p^{-D},
\]
where $D=poly\left(\rho\right).$ So in order to show there are many
such representations, it's enough to show that the contributions when
$\boldsymbol{a,b,c}\in\mathbb{F}^{\left(2c-1\right)\times N}$ are
not all zero are small for a.e. $\boldsymbol{h}\in V_{2}^{3}$. For
this, we calculate
\begin{gather*}
\left(\underset{{\scriptstyle \boldsymbol{h}\in V_{1}^{3}}}{\mathbb{E}}\left|\underset{{\scriptstyle \boldsymbol{\boldsymbol{r}}\in V_{1}^{2c-1}}}{\mathbb{E}}1_{E}\left(\sum_{i=1}^{c}r_{i}-\sum_{i=c+1}^{2c-1}r_{i}-t\right)\prod_{i=1}^{2c-1}1_{E}\left(r_{i}\right)e_{p}\left[\sum_{i=1}^{2c-1}\sum_{j=1}^{N}\left(a_{i,j}\left(r_{i},h_{1}\right)_{j}+b_{i,j}\left(r_{i},h_{2}\right)_{j}+c_{i,j}\left(r_{i},h_{3}\right)_{j}\right)\right]\right|\right)^{2}\leq\\
\underset{{\scriptstyle \boldsymbol{h}\in V_{1}^{3}}}{\mathbb{E}}\left|\underset{{\scriptstyle \boldsymbol{\boldsymbol{r}}\in V_{1}^{2c-1}}}{\mathbb{E}}1_{E}\left(\sum_{i=1}^{c}r_{i}-\sum_{i=c+1}^{2c-1}r_{i}-t\right)\prod_{i=1}^{2c-1}1_{E}\left(r_{i}\right)e_{p}\left[\sum_{i=1}^{2c-1}\sum_{j=1}^{N}\left(a_{i,j}\left(r_{i},h_{1}\right)_{j}+b_{i,j}\left(r_{i},h_{2}\right)_{j}+c_{i,j}\left(r_{i},h_{3}\right)_{j}\right)\right]\right|^{2}=\\
\underset{{\scriptstyle \boldsymbol{h}\in V_{1}^{3}}}{\mathbb{E}}\underset{{\scriptstyle \boldsymbol{r,s}\in V_{1}^{2c-1}}}{\mathbb{E}}1_{E}\left(\sum_{i=1}^{c}r_{i}-\sum_{i=c+1}^{2c-1}r_{i}-t\right)\prod_{i=1}^{2c-1}1_{E}\left(r_{i}\right)1_{E}\left(\sum_{i=1}^{c}s_{i}-\sum_{i=c+1}^{2c-1}s_{i}-t\right)\prod_{j=1}^{2c-1}1_{E}\left(s_{j}\right)\cdot\\
e_{p}\left[\sum_{i=1}^{2c-1}\sum_{j=1}^{N}\left(a_{i,j}\left(r_{i}-s_{i},h_{1}\right)_{j}+b_{i,j}\left(r_{i}-s_{i},h_{2}\right)_{j}+c_{i,j}\left(r_{i}-s_{i},h_{3}\right)_{j}\right)\right]\leq\\
\underset{{\scriptstyle \boldsymbol{r}\in V_{1}^{2c-1}}}{\mathbb{E}}\left|\underset{{\scriptstyle \boldsymbol{h}\in V_{1}^{3}}}{\mathbb{E}}e_{p}\left[\sum_{i=1}^{2c-1}\sum_{j=1}^{N}\left(a_{i,j}\left(r_{i},h_{1}\right)_{j}+b_{i,j}\left(r_{i},h_{2}\right)_{j}+c_{i,j}\left(r_{i},h_{3}\right)_{j}\right)\right]\right|.
\end{gather*}
We can bound this by squaring again:
\begin{gather*}
\left(\underset{{\scriptstyle \boldsymbol{r}\in V_{1}^{2c-1}}}{\mathbb{E}}\left|\underset{{\scriptstyle \boldsymbol{h}\in V_{1}^{3}}}{\mathbb{E}}e_{p}\left[\sum_{i=1}^{2c-1}\sum_{j=1}^{N}\left(a_{i,j}\left(r_{i},h_{1}\right)_{j}+b_{i,j}\left(r_{i},h_{2}\right)_{j}+c_{i,j}\left(r_{i},h_{3}\right)_{j}\right)\right]\right|\right)^{2}\leq\\
\underset{{\scriptstyle \boldsymbol{r}\in V_{1}^{2c-1}}}{\mathbb{E}}\left|\underset{{\scriptstyle \boldsymbol{h}\in V_{1}^{3}}}{\mathbb{E}}e_{p}\left[\sum_{i=1}^{2c-1}\sum_{j=1}^{N}\left(a_{i,j}\left(r_{i},h_{1}\right)_{j}+b_{i,j}\left(r_{i},h_{2}\right)_{j}+c_{i,j}\left(r_{i},h_{3}\right)_{j}\right)\right]\right|^{2}=\\
\underset{{\scriptstyle \boldsymbol{\boldsymbol{r}}\in V_{1}^{2c-1}}}{\mathbb{E}}\underset{{\scriptstyle \boldsymbol{h},\boldsymbol{h'}\in V_{1}^{3}}}{\mathbb{E}}e_{p}\left[\sum_{i=1}^{2c-1}\sum_{j=1}^{N}\left(a_{i,j}\left(r_{i},h_{1}-h'_{1}\right)_{j}+b_{i,j}\left(r_{i},h_{2}-h'_{2}\right)_{j}+c_{i,j}\left(r_{i},h_{3}-h'_{3}\right)_{j}\right)\right]\leq\\
\underset{{\scriptstyle \boldsymbol{h}\in V_{1}^{3}}}{\mathbb{E}}\left|\underset{{\scriptstyle \boldsymbol{\boldsymbol{r}}\in V_{1}^{2c-1}}}{\mathbb{E}}e_{p}\left[\sum_{i=1}^{2c-1}\sum_{j=1}^{N}\left(a_{i,j}\left(r_{i},h_{1}\right)_{j}+b_{i,j}\left(r_{i},h_{2}\right)_{j}+c_{i,j}\left(r_{i},h_{3}\right)_{j}\right)\right]\right|=\\
\underset{{\scriptstyle \boldsymbol{h}\in V_{1}^{3}}}{\mathbb{E}}\prod_{i=1}^{2c-1}\left|\underset{{\scriptstyle \boldsymbol{\boldsymbol{r}_{i}}\in V_{1}}}{\mathbb{E}}e_{p}\left[\sum_{j=1}^{N}\left(a_{i,j}\left(r_{i},h_{1}\right)_{j}+b_{i,j}\left(r_{i},h_{2}\right)_{j}+c_{i,j}\left(r_{i},h_{3}\right)_{j}\right)\right]\right|.
\end{gather*}
By Cauchy-Schwarzing twice we get
\begin{gather*}
\left|\underset{{\scriptstyle \boldsymbol{\boldsymbol{r}_{i}}\in V_{1}}}{\mathbb{E}}e_{p}\left[\sum_{j=1}^{N}\left(a_{i,j}\left(r_{i},h_{1}\right)_{j}+b_{i,j}\left(r_{i},h_{2}\right)_{j}+c_{i,j}\left(r_{i},h_{3}\right)_{j}\right)\right]\right|^{4}\leq\\
\left|\underset{{\scriptstyle \boldsymbol{\boldsymbol{r}_{i}}\in V_{1}}}{\mathbb{E}}e_{p}\left[\sum_{j=1}^{N}a_{i,j}\left(r_{i},h_{1}\right)_{j}\right]\right|\cdot\left|\underset{{\scriptstyle \boldsymbol{\boldsymbol{r}_{i}}\in V_{1}}}{\mathbb{E}}e_{p}\left[\sum_{j=1}^{N}b_{i,j}\left(r_{i},h_{2}\right)_{j}\right]\right|\cdot\left|\underset{{\scriptstyle \boldsymbol{\boldsymbol{r}_{i}}\in V_{1}}}{\mathbb{E}}e_{p}\left[\sum_{j=1}^{N}c_{i,j}\left(r_{i},h_{3}\right)_{j}\right]\right|.
\end{gather*}
Plugging this in yields
\begin{gather*}
\left[\underset{{\scriptstyle \boldsymbol{h}\in V_{1}^{3}}}{\mathbb{E}}\prod_{i=1}^{2c-1}\left|\underset{{\scriptstyle \boldsymbol{\boldsymbol{r}_{i}}\in V_{1}}}{\mathbb{E}}e_{p}\left[\sum_{j=1}^{N}\left(a_{i,j}\left(r_{i},h_{1}\right)_{j}+b_{i,j}\left(r_{i},h_{2}\right)_{j}+c_{i,j}\left(r_{i},h_{3}\right)_{j}\right)\right]\right|\right]^{4}\leq\\
\underset{{\scriptstyle \boldsymbol{h}\in V_{1}^{3}}}{\mathbb{E}}\prod_{i=1}^{2c-1}\left|\underset{{\scriptstyle \boldsymbol{\boldsymbol{r}_{i}}\in V_{1}}}{\mathbb{E}}e_{p}\left[\sum_{j=1}^{N}\left(a_{i,j}\left(r_{i},h_{1}\right)_{j}+b_{i,j}\left(r_{i},h_{2}\right)_{j}+c_{i,j}\left(r_{i},h_{3}\right)_{j}\right)\right]\right|^{4}\leq\\
\underset{{\scriptstyle \boldsymbol{h}\in V_{1}^{3}}}{\mathbb{E}}\prod_{i=1}^{2c-1}\left|\underset{{\scriptstyle \boldsymbol{\boldsymbol{r}_{i}}\in V_{1}}}{\mathbb{E}}e_{p}\left[\sum_{j=1}^{N}a_{i,j}\left(r_{i},h_{1}\right)_{j}\right]\right|\cdot\left|\underset{{\scriptstyle \boldsymbol{\boldsymbol{r}_{i}}\in V_{1}}}{\mathbb{E}}e_{p}\left[\sum_{j=1}^{N}b_{i,j}\left(r_{i},h_{2}\right)_{j}\right]\right|\cdot\left|\underset{{\scriptstyle \boldsymbol{\boldsymbol{r}_{i}}\in V_{1}}}{\mathbb{E}}e_{p}\left[\sum_{j=1}^{N}c_{i,j}\left(r_{i},h_{3}\right)_{j}\right]\right|\leq p^{-R},
\end{gather*}
so altogether we see that 
\[
\underset{{\scriptstyle \boldsymbol{h}\in V_{1}^{3}}}{\mathbb{E}}\left|\underset{{\scriptstyle \boldsymbol{\boldsymbol{r}}\in V_{1}^{2c-1}}}{\mathbb{E}}1_{E}\left(\sum_{i=1}^{c}r_{i}-\sum_{i=c+1}^{2c-1}r_{i}-t\right)\prod_{i=1}^{2c-1}1_{E}\left(r_{i}\right)e_{p}\left[\sum_{i=1}^{2c-1}\sum_{j=1}^{N}\left(a_{i,j}\left(r_{i},h_{1}\right)_{j}+b_{i,j}\left(r_{i},h_{2}\right)_{j}+c_{i,j}\left(r_{i},h_{3}\right)_{j}\right)\right]\right|\leq p^{-R/16}.
\]
Therefore we have
\begin{gather*}
\underset{{\scriptstyle \boldsymbol{h}\in V_{1}^{3}}}{\mathbb{P}}\left(\left|\underset{{\scriptstyle \boldsymbol{\boldsymbol{r}}\in V_{1}^{2c-1}}}{\mathbb{E}}1_{E}\left(\sum_{i=1}^{c}r_{i}-\sum_{i=c+1}^{2c-1}r_{i}-t\right)\prod_{i=1}^{2c-1}1_{E}\left(r_{i}\right)e_{p}\left[\sum_{i=1}^{2c-1}\sum_{j=1}^{N}\left(a_{i,j}\left(r_{i},h_{1}\right)_{j}+b_{i,j}\left(r_{i},h_{2}\right)_{j}+c_{i,j}\left(r_{i},h_{3}\right)_{j}\right)\right]\right|\geq\frac{1}{2}p^{-D}\right)\\
\leq2p^{D-R/16}.
\end{gather*}
Setting $A:=\left\{ \boldsymbol{h}\in V_{3}^{3}|\exists t\in V_{3}\,with\,density\leq\frac{1}{2}p^{-D}\,of\,h-admissible\,representations\right\} ,$
the union bound gives us $\mathbb{P}_{\boldsymbol{h}\in V_{3}^{3}}\left(\boldsymbol{h}\notin A\right)\leq2p^{E-R/16},$
with $E=poly\left(\rho\right).$ If $R=poly\left(\rho,\log\left(1/\varepsilon\right)\right)$
is large enough, this means that for $\varepsilon$-a.e. admissible
$\left(t,\boldsymbol{h}\right)\in V_{3}^{4}$, we have $\varDelta_{h_{1}}\varDelta_{h_{2}}\varDelta_{h_{3}}\varDelta_{t}f=0$.
\end{onehalfspace}
\end{proof}
\begin{lem}
\begin{onehalfspace}
\noindent If $\varepsilon=p^{-poly\left(\rho\right)}$ is small enough,
and $f$ is $\varepsilon$-a.e. $\left(V_{3},V_{3}\right)-cubic$,
then for \textbf{every }admissible $\boldsymbol{h}\in V_{3}^{4}$
we have $\varDelta_{h_{1}}\varDelta_{h_{2}}\varDelta_{h_{3}}\varDelta_{h_{4}}f=0$.
\end{onehalfspace}
\end{lem}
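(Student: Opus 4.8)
Throughout put $c(h_1,h_2,h_3,h_4):=\varDelta_{h_1}\varDelta_{h_2}\varDelta_{h_3}\varDelta_{h_4}f$. Since $\deg f=4$ this is a constant, it depends only on the quartic part of $f$, and --- because three differentiations kill every quadratic and lower-order error term --- it is additive, hence $\mathbb{F}$-linear, in each of its four arguments; moreover it is symmetric. For $v\in V_3$ write $v^{\perp}:=\{u\in V_3:(u,v)_n=0\text{ for all }n\in[N]\}$, a subspace of codimension $\le N$. A quadruple $\boldsymbol h\in V_3^4$ is admissible exactly when its entries are pairwise orthogonal, i.e. $h_j\in h_i^{\perp}$ whenever $i\ne j$. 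The plan is to bootstrap the hypothesis (that $c$ vanishes on all but an $\varepsilon$-fraction of admissible quadruples) up to the assertion that $c$ vanishes on \emph{every} admissible quadruple, by peeling off the four coordinates one at a time.

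The two ingredients are: (i) a nonzero $\mathbb{F}_p$-linear functional on a finite-dimensional space has kernel of density exactly $1/p$, so a linear functional that vanishes on a set of density $>1/p$ is identically zero; and (ii) passing to a subspace of codimension $\le m$ multiplies any exceptional density by at most $p^{m}$, and the measure induced on a lower-dimensional ``slice'' of the admissibility variety is comparable, up to a factor $p^{O(N)}$, to the uniform one. The obstacle these must get around is that one cannot simply fix three coordinates of $\boldsymbol h$ and invoke the hypothesis on the resulting slice: such a slice is only a $p^{-\Theta(\dim V_3)}$ fraction of all admissible quadruples and could lie entirely inside the exceptional set. Peeling coordinates one at a time works precisely because at each stage one restricts only to subspaces of codimension $O(N)$ (cut out by conditions of the form $(\cdot,\cdot)_n=0$), so the exceptional density is multiplied by only $p^{O(N)}=p^{\mathrm{poly}(\rho)}$ per step.

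Concretely: \emph{Step 1.} By (ii) and Markov, for all but a $p^{O(N)}\varepsilon^{1/2}$-fraction of admissible triples $(h_2,h_3,h_4)$ the conditional fraction of $h_1$ in $W_1:=h_2^{\perp}\cap h_3^{\perp}\cap h_4^{\perp}$ with $c(h_1,h_2,h_3,h_4)\ne 0$ is $<1/2$; for such a triple the linear functional $v\mapsto c(v,h_2,h_3,h_4)$ on $W_1$ vanishes on more than a $1/p$-fraction of $W_1$, hence on all of $W_1$. \emph{Step 2.} Running the same argument one level down: for all but a small fraction of admissible pairs $(h_3,h_4)$, the set of $w\in W_{34}:=h_3^{\perp}\cap h_4^{\perp}$ for which the Step~1 conclusion holds has density $\ge 1-p^{O(N)}\varepsilon^{1/4}$ in $W_{34}$, and then for each fixed $v_0\in W_{34}$ the linear functional $w\mapsto c(v_0,w,h_3,h_4)$ on the codimension-$\le N$ subspace $W_{34}\cap v_0^{\perp}$ vanishes on more than a $1/p$-fraction, hence identically; this gives $c(v,w,h_3,h_4)=0$ for all $v,w\in W_{34}$ with $(v,w)_n=0$ for every $n$. \emph{Steps 3 and 4} remove the remaining two coordinates by the identical device (de-randomizing one further linear argument, now restricting by two resp. three more orthogonality conditions, i.e. to subspaces of codimension $\le 2N$ resp. $\le 3N$). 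After Step~4 one has $c(h_1,h_2,h_3,h_4)=0$ for every pairwise-orthogonal --- i.e. admissible --- quadruple in $V_3^4$, which is the claim. The only quantitative requirement is that every exceptional density encountered stay below $1/2$, which amounts to $\varepsilon<p^{-CN}$ for an absolute constant $C$; since $N=\mathrm{poly}(\rho)$, this holds once $\varepsilon=p^{-\mathrm{poly}(\rho)}$ with a large enough exponent --- exactly the hypothesis. (Note that $R$-regularity of the $Q_i$ is not used in this lemma; only the smallness of $\varepsilon$.)
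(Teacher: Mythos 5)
Your argument is correct, but it takes a genuinely different route from the paper. The paper's proof is a one-shot ``cube-shifting'' argument: using the same multilinearity of $c(h_1,h_2,h_3,h_4):=\varDelta_{h_1}\varDelta_{h_2}\varDelta_{h_3}\varDelta_{h_4}f$ that you invoke, it writes $c(h_1,\dots,h_4)=\sum_{\omega\in\{0,1\}^{4}}c\bigl(t_1+\omega_1(h_1-2t_1),\dots,t_4+\omega_4(h_4-2t_4)\bigr)$, sets $W=\{t\in V_3:(t,h_j)_i=0\ \forall i,j\}$, and uses Claim \ref{claim:admissible-density} to show that admissible $\boldsymbol{t}\in W^{4}$ have density at least $p^{-22N}$ in $V_3^{4}$; for such $\boldsymbol{t}$ all sixteen quadruples in the sum are admissible, and since each of the sixteen substitutions is an affine bijection of $V_3^{4}$ the union of their exceptional sets has density at most $16\varepsilon$, so a single $\boldsymbol{t}$ killing all sixteen terms exists once $16\varepsilon<p^{-22N}$. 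Your iterative peeling replaces this counting step by the observation that a nonzero linear functional on an $\mathbb{F}_p$-space vanishes on exactly a $1/p$-fraction of it, which lets you upgrade ``a.e.''\ to ``everywhere'' one argument at a time. Both proofs rest on the same two pillars --- multilinearity of the fourth difference and the $p^{-O(N)}$ lower bound on the density of each orthogonality condition --- and both need only $\varepsilon<p^{-CN}$ with $N=poly(\rho)$; yours is somewhat longer but makes explicit why $R$-regularity is irrelevant here, while the paper's handles all four coordinates in one stroke. The only blemish in your write-up is the unexplained exponents $\varepsilon^{1/2},\varepsilon^{1/4}$ in the Markov steps: with threshold $1/2$ a direct Markov bound gives exceptional fractions of the form $p^{O(N)}\varepsilon$, but any bound $p^{O(N)}\varepsilon^{c}$ with $c>0$ serves equally well, so this does not affect correctness.
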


\begin{proof}
\begin{onehalfspace}
\noindent Let $\boldsymbol{h}\in V_{3}^{4}$ be admissible. If we
can find $\boldsymbol{t}\in V_{3}^{4}$ such that for all $\omega\in\left\{ 0,1\right\} ^{4}$
\begin{align*}
\varDelta_{t_{1}+\omega_{1}\left(h_{1}-2t_{1}\right)}\varDelta_{t_{2}+\omega_{2}\left(h_{2}-2t_{2}\right)}\varDelta_{t_{3}+\omega_{3}\left(h_{3}-2t_{3}\right)}\varDelta_{t_{4}+\omega_{4}\left(h_{4}-2t_{4}\right)}f & =0,
\end{align*}
then we get 
\[
\varDelta_{h_{1}}\varDelta_{h_{2}}\varDelta_{h_{3}}\varDelta_{h_{4}}f=\sum_{\omega\in\left\{ 0,1\right\} ^{4}}\varDelta_{t_{1}+\omega_{1}\left(h_{1}-2t_{1}\right)}\varDelta_{t_{2}+\omega_{2}\left(h_{2}-2t_{2}\right)}\varDelta_{t_{3}+\omega_{3}\left(h_{3}-2t_{3}\right)}\varDelta_{t_{4}+\omega_{4}\left(h_{4}-2t_{4}\right)}f=0.
\]
If we can show that there's a set of positive density of $\boldsymbol{t}\in V_{3}^{4}$
such that $\forall\omega\in\left\{ 0,1\right\} ^{4}$ the vectors
\begin{align*}
\left(t_{1}+\omega_{1}\left(h_{1}-2t_{1}\right),t_{2}+\omega_{2}\left(h_{2}-2t_{2}\right),t_{3}+\omega_{3}\left(h_{3}-2t_{3}\right),t_{4}+\omega_{4}\left(h_{4}-2t_{4}\right)\right)
\end{align*}

\noindent are admissible, then we're done.

\noindent Setting $W:=\left\{ t\in V_{3}|\,\left(t,h_{1}\right)_{i}=\left(t,h_{2}\right)_{i}=\left(t,h_{3}\right)_{i}=\left(t,h_{4}\right)_{i}=0\,for\,1\leq i\leq N\right\} $
, we see that any admissible $\boldsymbol{t}\in W^{4}$ will do the
job. 

\noindent By Claim (\ref{claim:admissible-density}) we have 
\begin{align*}
\mathbb{P}_{\boldsymbol{t}\in V_{3}^{4}}\left(\boldsymbol{t}\,is\,admissible,\,\boldsymbol{t}\in W^{4}\right) & \geq\left(\frac{\left|W\right|}{\left|V_{3}\right|}\right)^{4}{\displaystyle \mathbb{P}_{\boldsymbol{t}\in W^{4}}\left(\boldsymbol{t}\,is\,admissible\right)}\\
 & \geq p^{-16N}p^{-6N}=p^{-22N}.
\end{align*}

\noindent Therefore, if $\varepsilon=p^{-poly\left(\rho\right)}$
is small enough, we must have some admissible $\boldsymbol{t}\in W^{4}$
such that for all $\omega\in\left\{ 0,1\right\} ^{4}$
\[
\varDelta_{t_{1}+\omega_{1}\left(h_{1}-2t_{1}\right)}\varDelta_{t_{2}+\omega_{2}\left(h_{2}-2t_{2}\right)}\varDelta_{t_{3}+\omega_{3}\left(h_{3}-2t_{3}\right)}\varDelta_{t_{4}+\omega_{4}\left(h_{4}-2t_{4}\right)}f=0.
\]

\noindent which proves the claim.
\end{onehalfspace}
\end{proof}
\begin{onehalfspace}
\noindent We can now prove Theorem \ref{thm:Main}. By the results
of this section, if $Q_{1},\ldots Q_{N}$ are $R$-regular with $R=poly\left(\rho\right)$
then we are left with a subspace $V_{3}\subset\mathbb{F}^{n}$ such
that $codim\left(V_{3}\right)=poly\left(\rho\right)$ and for every
admissible $\boldsymbol{h}\in V_{3}^{4}$ we have $\varDelta_{h_{1}}\varDelta_{h_{2}}\varDelta_{h_{3}}\varDelta_{h_{4}}f=0.$
For any $x\in X\cap V_{3},$ $\left(x,x,x,x\right)\in V_{3}^{4}$
is admissible so we have $\varDelta_{x}\varDelta_{x}\varDelta_{x}\varDelta_{x}f=0$. 
\end{onehalfspace}

\end{document}